\documentclass[11pt]{amsart}
\usepackage[margin=1in]{geometry}
\usepackage{latexsym}
\usepackage{amsfonts}
\usepackage{amsmath}
\usepackage{amssymb}
\usepackage{amsthm}
\usepackage{enumerate}
\setlength{\parskip}{1em}
\usepackage[hang,flushmargin]{footmisc}
\usepackage{caption}
\usepackage{tabu}
\usepackage{mathrsfs}
\usepackage{amsaddr}

\usepackage{graphicx}
\usepackage{epstopdf}
\usepackage{epsfig}
\usepackage{caption}
 
\usepackage{bm} 
\usepackage{tikz}

\newtheorem{theorem}{Theorem}[section]
\newtheorem{lemma}{Lemma}[section]

\newtheorem{corollary}{Corollary}[section]

\newtheorem{conjecture}{Conjecture}[section]

\theoremstyle{definition}

\newtheorem{remark}{Remark}[section]
\newtheorem{example}{Example}[section]

\begin{document}
\title{Poset Pattern-Avoidance Problems Posed by Yakoubov}
\author{Colin Defant}
\address{University of Florida \\ 1400 Stadium Rd. \\ Gainesville, FL 32611 United States}
\email{cdefant@ufl.edu}

\begin{abstract} 
Extending the work of Yakoubov, we enumerate the linear extensions of comb posets that avoid certain length-$3$ patterns. We resolve many of Yakoubov's open problems and prove both of the conjectures from her paper. 
\end{abstract} 

\maketitle

\bigskip

\noindent 2010 {\it Mathematics Subject Classification}: Primary 05A05; Secondary 05A15, 05A16.    

\noindent \emph{Keywords: Poset; pattern avoidance; linear extension; comb. }
 
\section{Introduction} 
A linear extension of a partially ordered set $P$ is an order-preserving bijection from $P$ to a totally ordered set. We may view a linear extension as a permutation $\pi$ of the elements of $P$ with the property that $x$ precedes $y$ in $\pi$ whenever $x<_Py$. Determining $e(P)$, the number of linear extensions of the poset $P$, is a problem that has motivated an enormous amount of research in order theory. Indeed, Richard Stanley \cite[p. 258]{Stanley} states that $e(P)$ ``is probably the single most useful number for measuring the `complexity' of $P$." The problem of determining $e(P)$ for various posets $P$ is of crucial importance in the study of sorting algorithms, particularly when one has incomplete information concerning the objects that need to be sorted. Brightwell and Winkler showed that, in general, this problem is $\#P$-complete \cite{Brightwell}. However, several authors have developed algorithms to perform tasks such as quickly approximating $e(P)$ or generating random linear extensions of $P$ \cite{Banks, De Loof, Karzanov}. 

Another field of study that has spawned a great deal of research is that of permutation pattern avoidance. Suppose $\pi=\pi_1\pi_2\cdots\pi_n\in S_n$ and $\tau=\tau_1\tau_2\cdots\tau_k\in S_k$. We say that $\pi$ \emph{contains the pattern $\tau$} if there are indices $i_1,i_2,\ldots,i_k\in[n]$ with $i_1<i_2<\cdots <i_k$ such that for all $j,\ell\in[k]$, we have $\pi_{i_j}<\pi_{i_\ell}$ if and only if $\tau_j<\tau_\ell$. If $\pi$ does not contain the pattern $\tau$, we say that $\pi$ \emph{avoids} $\tau$. Mikl\'os B\'ona's book \emph{Combinatorics of Permutations} provides an excellent reference for anyone wishing to learn more about the flourishing area of research that deals with permutation patterns \cite{Bona}. 

Recently, Sophia Yakoubov posed a natural question that links the study of linear extensions of posets with that of permutation patterns \cite{Yakoubov}. Suppose we are given a finite poset $P$ with $\vert P\vert=n$. In addition, suppose we bijectively label the elements of $P$ with the elements of $[n]$. Using this labeling, we may view each linear extension of $P$ as a permutation in $S_n$. Given $\tau\in S_k$, we wish to compute $A_\tau(P)$, the number of linear extensions of $P$ that avoid $\tau$. As stated, this is a very general and difficult problem. Therefore, Yakoubov devoted her attention to a specific class of posets called \emph{combs}. Anderson, Egge, Riehl, Ryan, Steinke, and Vaughan recently extended this line of research by investigating pattern-avoiding linear extensions of ``rectangular posets" \cite{Anderson}.  

The comb $K_{s,t}$ is specified by two parameters: the spine length $s$ and the tooth length $t$. The base set for this poset is $\{e_{i,j}\colon 1\leq i\leq s, 1\leq j\leq t\}$. We define the partial ordering on $K_{s,t}$ by specifying the covering relations as follows:
\begin{enumerate}[(i)]
\item $e_{i,1}\lessdot e_{i+1,1}$ for all $1\leq i\leq s-1$;
\item $e_{i,j}\lessdot e_{i,j+1}$ for all $1\leq i\leq s$ and $1\leq j\leq t-1$.   
\end{enumerate} 
The comb $K_{4,3}$ is displayed as the leftmost image in Figure \ref{Fig1}. 

Yakoubov considers two canonical ways to label the elements of $K_{s,t}$ with the elements of $[st]$. In the first labeling, called an $\alpha$-labeling, we give the element $e_{i,j}$ the label $(j-1)s+i$. In the second labeling, called a $\beta$-labeling, we give $e_{i,j}$ the label $(i-1)t+j$. We denote the $\alpha$-labeled (respectively, $\beta$-labeled) comb $K_{s,t}$ by $K_{s,t}^\alpha$ (respectively, $K_{s,t}^\beta$). The middle and right images in Figure \ref{Fig1} portray the labeled posets $K_{4,3}^\alpha$ and $K_{4,3}^\beta$, respectively.  

\begin{figure}[t]
\begin{center} 
\includegraphics[height=4cm]{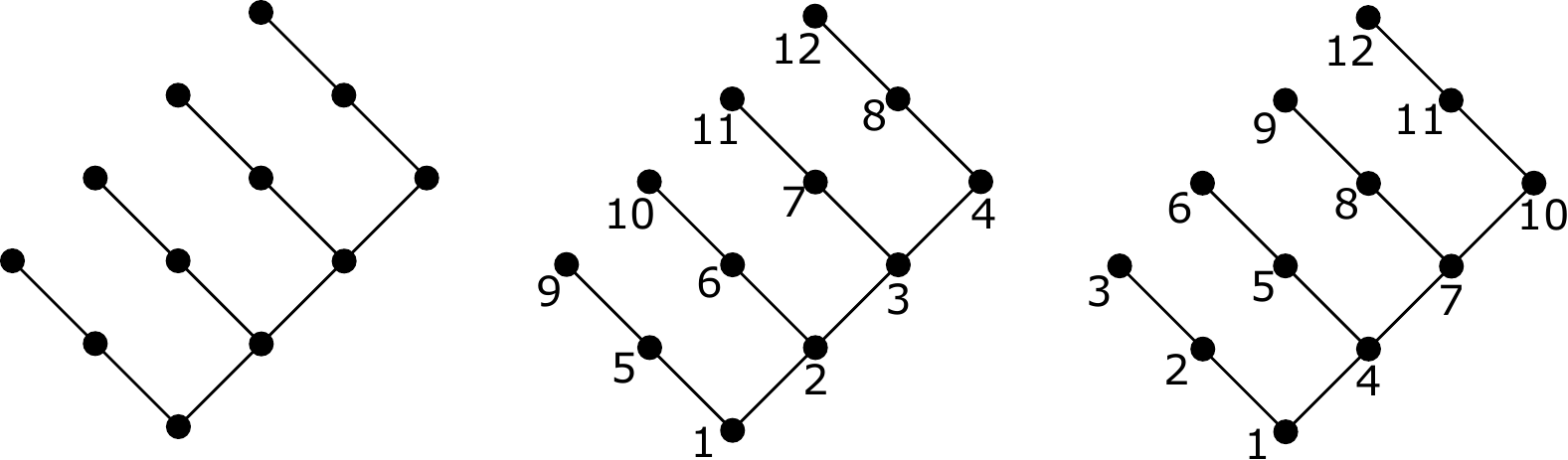}
\end{center}
\captionof{figure}{The leftmost figure is the Hasse diagram of the comb $K_{4,3}$. The image in the center is the Hasse diagram of the $\alpha$-labeled comb $K_{4,3}^\alpha$. On the right is the Hasse diagram of $K_{4,3}^\beta$.} \label{Fig1}
\end{figure}

Yakoubov enumerated the linear extensions of $K_{s,t}^\alpha$ and $K_{s,t}^\beta$ that avoid various patterns of length $3$. However, she did not enumerate these linear extensions for all choices of patterns. In the following section, we summarize the results, open problems, and conjectures given in Yakoubov's paper. In the third section, we resolve many of those open problems and conjectures. We end with some suggestions for further research. 

\section{Yakoubov's Results, Open Problems, and Conjectures} 
In this section, we summarize the paper \cite{Yakoubov}, omitting all proofs. Recall the definitions of the labeled combs $K_{s,t}^\alpha$ and $K_{s,t}^\beta$ from the introduction. When applying inductive arguments to these labeled posets, one often needs to build large combs from smaller combs by annexing one element at a time. Therefore, we will be working with what Yakoubov calls ``uneven combs." The labeled uneven comb $U_{\text{spine}=s,n}^\alpha$ is the labeled subposet of $K_{s,\left\lceil n/s\right\rceil}^\alpha$ obtained by removing all elements whose labels are larger than $n$ (so $\vert U_{\text{spine}=s,n}^\alpha\vert=n$). Figure \ref{Fig2} shows the labeled uneven comb $U_{\text{spine}=5,12}^\alpha$.

\begin{figure}[t]
\begin{center} 
\includegraphics[height=4cm]{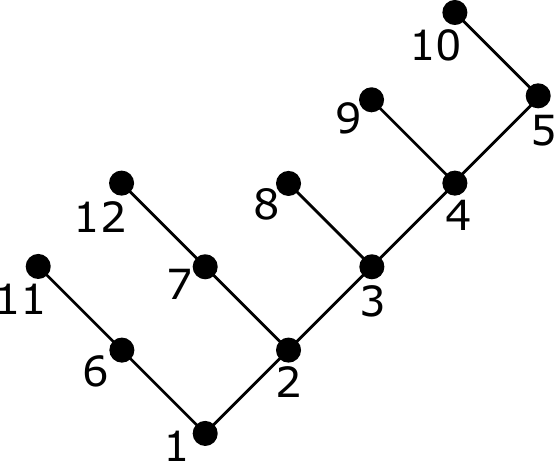}
\end{center}
\captionof{figure}{The Hasse diagram of the uneven comb $U_{\text{spine}=s,12}^\alpha$.} \label{Fig2}
\end{figure}

Recall that $A_{\tau}(P)$ is the number of linear extensions of the labeled poset $P$ that avoid the pattern $\tau$. More generally, denote by $A_{\tau_1,\tau_2,\ldots,\tau_r}(P)$ the number of linear extensions of $P$ avoiding all of the patterns $\tau_1,\tau_2,\ldots,\tau_r$. 

\begin{theorem}[Yakoubov, \cite{Yakoubov}]\label{ThmY1}
Let $C_m=\frac{1}{m+1}{2m\choose m}$ be the $m^\text{th}$ Catalan number. We have 
\begin{itemize}
\item $A_{123}(K_{s,t}^\alpha)=0$ if $s,t\geq 2$; 
\item $A_{132}(K_{s,t}^\alpha)=1$; 
\item $A_{213}(K_{s,t}^\alpha)=C_s$ if $t>1$;
\item $A_{312}(K_{s,2}^\alpha)=C_{s+1}-C_s$.  
\end{itemize}
\end{theorem}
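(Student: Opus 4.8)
All four parts rest on two observations. First, the $\alpha$-labeling is order-preserving: every covering relation $e_{i,1}\lessdot e_{i+1,1}$ and $e_{i,j}\lessdot e_{i,j+1}$ raises the label, so $x<_P y$ implies $\mathrm{label}(x)<\mathrm{label}(y)$. Hence the increasing sequence $1\,2\cdots st$ is a linear extension, the element $1=e_{1,1}$ is the global minimum and occupies the first position of \emph{every} linear extension, and — the fact I lean on repeatedly — each spine element $e_{i,1}$ has label $\le s$ while each tooth element $e_{i,j}$ with $j\ge2$ has label $>s$. Second, I use the order-ideal decompositions of pattern-avoiding permutations: writing $\pi=L\,\mu\,R$ with $\mu$ the minimum entry, $\pi$ avoids $213$ iff $\min L>\max R$ and $L,R$ avoid $213$, whereas $\pi$ avoids $312$ iff $\max L<\min R$ and $L,R$ avoid $312$ (in both patterns the smallest entry sits in the middle).

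\textbf{Bullets one and two.} For $A_{123}$ the plan is to exhibit a forced increasing triple: when $s,t\ge2$ the chain $e_{1,1}<_P e_{2,1}<_P e_{2,2}$ has strictly increasing labels $1,2,s+2$, so these elements appear in this relative order in every linear extension and form a $123$ pattern; thus $A_{123}=0$. For $A_{132}$ I claim the identity is the unique avoider. It is one (it is increasing). For uniqueness, suppose $\pi$ avoids $132$ with $\pi\neq 1\,2\cdots st$, and let $p$ be the first position with $\pi_p\neq p$; then $\pi_1\cdots\pi_{p-1}=1\cdots(p-1)$, so $\pi_p>p$ and the value $p$ lies at some later position $q$. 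Because $\pi_1=1$ forces $p\ge2$, the entries at positions $(p-1,p,q)$ are $(p-1,\pi_p,p)$ with $p-1<p<\pi_p$, a $132$ pattern — contradiction. The poset enters precisely by guaranteeing $\pi_1=1$, which is what rules out the escape $p=1$.

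\textbf{Bullet three.} Here is the main combinatorial point; the plan is to show that $213$-avoidance forces everything except the top level. Peel minima from the left using the $213$-decomposition: at each stage the unplaced elements form a filter $S$ whose minimum-label element $m$ is minimal in $S$ (its predecessors have smaller labels, so none remain). If $S$ is not an antichain, $m$ is forced to the front, i.e.\ the block $L$ before it is empty: any nonempty order ideal $L$ of $S$ with $\min L>\max R$ would consist of the largest labels of $S$, but an order ideal containing a maximal (top-level) element must contain the small-label elements beneath it, contradicting $\min L>\max R$ since $R$ still holds large labels. One checks that $\{\,\text{labels}\ge v\,\}$ is an antichain exactly when $v=(t-1)s+1$, namely when only the top level $\{e_{1,t},\dots,e_{s,t}\}$ remains. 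So labels $1,\dots,(t-1)s$ emerge in increasing order and the sole freedom is the arrangement of the $s$-element top antichain; any inversion a $213$ needs must lie inside it. As that antichain carries distinct labels, the number of admissible arrangements is the number of $213$-avoiding permutations in $S_s$, which is $C_s$.

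\textbf{Bullet four.} I run the analogous peeling with the $312$-decomposition $\pi=L\,\mu\,R$, $\max L<\min R$. While the current minimum is a spine label $k<s$, the set $R$ still contains the unplaced spine label $k+1\le s$, so $\min R\le s$; every element available to $L$ is a tip with label $>s$, forcing $L$ empty. Thus the spine labels $1,\dots,s-1$ come out first with no tip interleaved. At the last spine label $s$ the decomposition $R^{(s)}=L_s\,s\,R_s$ with $\max L_s<\min R_s$ splits the tips into consecutive blocks $L_s=\{s+1,\dots,s+m\}$ (before $s$) and $R_s=\{s+m+1,\dots,2s\}$ (after $s$) for some $0\le m\le s-1$, with the top tip $2s$ always in $R_s$. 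Each block is an antichain of available tips, freely arrangeable in any $312$-avoiding order, contributing $C_m$ and $C_{s-m}$. Hence $A_{312}(K_{s,2}^\alpha)=\sum_{m=0}^{s-1}C_mC_{s-m}=C_{s+1}-C_s$, where the last step is the Catalan convolution $\sum_{m=0}^{s}C_mC_{s-m}=C_{s+1}$ with the $m=s$ term $C_sC_0=C_s$ deleted.

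\textbf{Main obstacle.} The nontrivial work is the structural step behind bullets three and four: proving that avoidance rigidly sorts the low-label elements into increasing order and confines all freedom to an antichain of tips. The delicate point is verifying that the order-ideal condition ($\min L>\max R$ for $213$, $\max L<\min R$ for $312$) is incompatible with placing any high-label tip early — exactly where the label gap between spine ($\le s$) and teeth ($>s$) does the work. I would write these forcing lemmas carefully, after which the enumeration in each case is immediate.
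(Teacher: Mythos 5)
This statement is quoted in the paper as Theorem 2.1 from Yakoubov's article, and Section 2 explicitly omits all proofs, so there is no in-paper argument to compare yours against; judged on its own merits, your proposal is correct. (For what it is worth, the paper's summary indicates that Yakoubov's own arguments proceed inductively via the uneven combs $U_{\text{spine}=s,n}^\alpha$, building combs one element at a time, whereas yours is a direct structural decomposition around minima; your route has the advantage of exposing exactly where the freedom lives — the top antichain for $213$, the two tooth blocks around the last spine label for $312$ — and of making the Catalan counts transparent.) Bullets one, two, and four are complete as written: the chain $e_{1,1}<_Pe_{2,1}<_Pe_{2,2}$ with labels $1,2,s+2$ settles $123$; the first-disagreement argument for $132$ correctly identifies $\pi_1=1$ as the only input needed from the poset; and in bullet four the inequalities $\max L<\min R$, $\min R\le k+1\le s$, and ``tooth labels exceed $s$'' really do force the spine labels $1,\dots,s-1$ out first with nothing interleaved, after which $\sum_{m=0}^{s-1}C_mC_{s-m}=C_{s+1}-C_s$ is exactly right (the tooth $2s>_Ps$ correctly pins $m\le s-1$, and the recursive decomposition guarantees that blockwise avoidance is equivalent to global avoidance).

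The one place where a sketch rather than a proof is carrying the weight is the forcing lemma in bullet three, and your one-line justification (``$R$ still holds large labels'') should be made precise, since it is the crux you yourself flag. Here is the clean completion, in your notation with placed $=[m-1]$ and $m\le(t-1)s$: the element covering $m$ on its tooth has label $m+s\le st$ and lies above $m$ in the poset, so it belongs to $R$ and $\max R\ge m+s$. If $L\neq\emptyset$, then $\min L>\max R$ forces $L$ to be the block of largest labels of $S\setminus\{m\}$, so $st\in L$; since the prefix of $\pi$ ending just before $m$ is an order ideal, every unplaced element of the chain below $e_{s,t}$ — whose label set is $\{1,\dots,s\}\cup\{2s,3s,\dots,st\}$, with consecutive gaps at most $s$ — must also lie in $L$, and in particular $L$ contains some label $c$ with $m<c\le m+s$. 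Then $\min L\le c\le m+s\le\max R$, contradicting $\min L>\max R$; so $L=\emptyset$ and $m$ is placed next. One small slip to fix in the write-up: $\{\text{labels}\ge v\}$ is an antichain for every $v\ge(t-1)s+1$, not ``exactly when $v=(t-1)s+1$''; what you actually need, and what the induction gives, is that the peeling first reaches an antichain precisely when placed $=[(t-1)s]$, leaving the top row free, whence the count $C_s$.
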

Yakoubov leaves the determination of $A_{312}(K_{s,t}^\alpha)$ for $t>2$ as an open problem. She also leaves open the determination of $A_{231}(K_{s,t}^\alpha)$ and $A_{321}(K_{s,t}^\alpha)$ and makes the following conjecture. 
\begin{conjecture}[Yakoubov, \cite{Yakoubov}]\label{Conj1}
We have \[\sum_{s\geq 0}A_{231}(K_{s,2}^\alpha)x^s=C(x\cdot C(x)),\] where $C(x)=\dfrac{1-\sqrt{1-4x}}{2x}$ is the generating function of the Catalan numbers. 
\end{conjecture}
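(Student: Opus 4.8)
The plan is to understand the structure of linear extensions of $K_{s,2}^\alpha$ that avoid $231$, then translate the avoidance condition into a combinatorial description that can be counted by a generating function satisfying the functional equation implicit in $C(xC(x))$. Let me think about what $K_{s,2}^\alpha$ looks like. Here $t=2$, so each tooth has two elements. Under the $\alpha$-labeling, $e_{i,j}$ gets label $(j-1)s+i$. So $e_{i,1}$ has label $i$ and $e_{i,2}$ has label $s+i$. The spine elements (bottom row) are labeled $1,2,\ldots,s$ and the top elements are labeled $s+1,\ldots,2s$.

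The covering relations: along the spine $e_{i,1}\lessdot e_{i+1,1}$, i.e., $i \lessdot i+1$ for $i=1,\ldots,s-1$; and each tooth $e_{i,1}\lessdot e_{i,2}$, i.e., $i \lessdot s+i$. So a linear extension is a permutation of $[2s]$ where each $i$ (for $i\le s$) must come before $i+1$ (when $i<s$) and before $s+i$. In other words, the numbers $1,2,\ldots,s$ appear in increasing order (since $i$ precedes $i+1$ throughout), and each $i\le s$ precedes $s+i$.

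First I would reformulate: a linear extension is determined by interleaving the "upper" labels $s+1,\ldots,2s$ among the forced-increasing sequence $1,2,\ldots,s$, subject to $s+i$ appearing after $i$. I would set up the generating function $F(x)=\sum_{s\ge 0}A_{231}(K_{s,2}^\alpha)x^s$ and aim to show $F(x)=C(xC(x))$. The substitution $y=xC(x)$ satisfies $y=x/(1-y)$ (equivalently $y-y^2=x$), and $C(y)$ satisfies $C(y)=1+yC(y)^2$; so $F$ should satisfy a composed functional equation. The key is to find a decomposition of $231$-avoiding linear extensions that mirrors the "Catalan of a Catalan substitution" structure: the outer $C$ corresponds to a first-return decomposition of some Dyck-like object, and each returned block is itself weighted by $xC(x)$, reflecting that inside each block the spine contributes a factor and the tooth-tops contribute a further Catalan factor.

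The central step I would carry out is a recursive decomposition keyed on the position of the largest spine element or, more likely, on where the label $1$ and its tooth-partner $s+1$ sit. Since $1$ must be the first spine element and everything small, a $231$-pattern is an occurrence of a large-then-larger-then-small triple; I would track how the top labels $s+i$ (which are the "large" values available to play the role of the $2$ and $3$ in $231$) can be placed without completing a $231$ against a later small spine value. I expect the avoidance condition to force a nested/laminar constraint on the interleaving, which is exactly the kind of structure enumerated by a Catalan substitution. Concretely, I would aim to show that $F$ satisfies $F(x)=C(xF\text{-block}(x))$ or directly verify $F=C(xC(x))$ by exhibiting a bijection between $231$-avoiding linear extensions of $K_{s,2}^\alpha$ and pairs consisting of a Dyck path of semilength equal to the number of first-return blocks, together with, in each block, a smaller such object — thereby recovering the substitution $C(xC(x))$.

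The hard part will be pinning down the exact recursive structure that produces the \emph{composition} $C(xC(x))$ rather than a plain Catalan or a single algebraic equation, i.e., correctly identifying the two nested Catalan layers. The subtlety is that the forced increasing order of the spine labels interacts with the placement of the tooth-tops in a way that must be shown to decouple into an outer Catalan structure (over blocks) and an inner Catalan structure (within each block), and I would need to verify that the $231$-avoidance is precisely equivalent to the conjunction of these two nested constraints with no additional cross-block interactions. Establishing this clean separation — and ruling out spurious $231$ patterns that straddle block boundaries — is where the main combinatorial work lies; once the decomposition is justified, extracting the functional equation and recognizing it as $C(xC(x))$ should follow by a routine generating-function computation.
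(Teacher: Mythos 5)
Your proposal sets up the problem correctly (the poset structure of $K_{s,2}^\alpha$, the forced increasing order of $1,2,\ldots,s$, the relations $i<s+i$) and correctly anticipates that the answer should come from a decomposition whose generating-function shadow is the composition $C(x\cdot C(x))$. But it stops exactly where the proof has to start: you never exhibit the decomposition, never prove that $231$-avoidance is equivalent to your hypothesized nested structure, and you say explicitly that establishing the ``clean separation'' into two Catalan layers is where the main combinatorial work lies. That work is the entire content of the theorem; what you have is a statement of intent, not a proof. Moreover, the specific route you gesture at --- a first-return decomposition producing a functional equation of the shape $F(x)=C(x\cdot\text{block}(x))$ --- has a concrete obstruction you do not address: the variable $x$ marks the spine length $s$, and cutting a $231$-avoiding linear extension of $K_{s,2}^\alpha$ at a ``first return'' does not cut the comb into smaller combs, so the pieces produced by such a cut are not objects counted by $F$ (nor, on their face, by $C$). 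Some device for handling these non-comb pieces is required, and your outline contains none.

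The paper resolves precisely this issue, and by a route quite different from a self-similar recursion. It works with the uneven combs $U_{\text{spine}=s,n}^\alpha$ and decomposes a $231$-avoiding linear extension $\pi$ of $U_{\text{spine}=s,2s-h}^\alpha$ (Conjecture \ref{Conj1} being the case $h=0$) into three independent pieces: the subword of $\pi$ on the large entries $\{s+1,\ldots,2s-h\}$ splits at the position of $s+1$ into a prefix $\sigma_1\cdots\sigma_m$ (a $231$-avoiding permutation beginning with its maximum and with its minimum in position $k$) and a suffix $\sigma_{m+1}\cdots\sigma_{s-h}$ (an arbitrary $231$-avoiding permutation), while the interleaving with the spine entries is recorded by a single set $T$, an $(s-m)$-element subset of $\{m+1,\ldots,s+k\}$; the $231$-condition forces everything else about the interleaving. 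Counting the prefixes is itself nontrivial and is the content of Lemma \ref{Lem1}: their generating function is $x^kC(x)^{k-1}$, obtained from a result of Connolly--Gabor--Godbole on $123$-avoiding permutations via the Simion--Schmidt bijection. The composition $C(x\cdot C(x))$ then emerges not from a functional equation but from summing $\sum_{n,k}C_{n-h}\binom{n+k}{n}x^n\cdot x^kC(x)^{k-1}$ and applying $\sum_{k\geq 0}\binom{n+k}{n}z^k=(1-z)^{-(n+1)}$ together with $C(x)=1/(1-x\cdot C(x))$. None of these ingredients --- the uneven combs, the three-part bijection, the auxiliary lemma, or the binomial summation --- appears in your outline, so the gap you have left is not a routine one to fill.
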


In Section 3, we show that \[\sum_{s\geq 0}A_{231}(U_{\text{spine}=s,2s-h}^\alpha)x^s=(x\cdot C(x))^hC(x\cdot C(x))\] for each $h\geq 0$. When $h=0$, this is exactly Conjecture \ref{Conj1} because $U_{\text{spine}=s,2s}^\alpha=K_{s,2}^\alpha$. We also derive simple recurrence relations for $A_{231}(U_{\text{spine}=s,n}^\alpha)$ and $A_{312}(U_{\text{spine}=s,n}^\alpha)$. This settles the problems of determining $A_{231}(K_{s,t}^\alpha)$ and $A_{312}(K_{s,t}^\alpha)$. Furthermore, we derive a method for computing $A_{321}(K_{s,2}^\alpha)$ and show that \[4^{t-2}(3+2\sqrt 2)\leq\lim_{s\to\infty}\sqrt[s]{A_{321}(K_{s,t}^\alpha)}\leq\frac{(2t-1)^{2t-1}}{(t-1)^{t-1}t^t}\] for each fixed $t\geq 2$. 

The next theorem concerns linear extensions of $\beta$-labeled combs. 
\begin{theorem}[Yakoubov, \cite{Yakoubov}]\label{ThmY2} 
We have 
\begin{itemize}
\item $A_{123}(K_{s,t}^\beta)=0$ if $s,t\geq 2$;
\item $A_{132}(K_{s,t}^\beta)=1$; 
\item $A_{213}(K_{s,t}^\beta)=A_{231}(K_{s,t}^\beta)=t^{s-1}$;
\item $A_{312}(K_{s,t}^\beta)=\frac{1}{st+1}{s(t+1)\choose s}$ if $t>1$;
\item $A_{321}(K_{s,2}^\beta)=\frac{1}{2s+1}{3s\choose s}$.
\end{itemize}
\end{theorem}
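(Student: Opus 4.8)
The unifying fact I would use is that in every linear extension $w=w_1\cdots w_{st}$ of $K_{s,t}^\beta$ one has $w_1=1$, and more generally the spine label $(i-1)t+1$ of $e_{i,1}$ precedes every larger label; hence each tooth is a block of $t$ consecutive labels, the teeth are value-ordered, and $(i-1)t+1$ is always the first of the labels $\geq (i-1)t+1$ to appear. Two of the claims then fall out immediately. Since $e_{1,1}\lessdot e_{2,1}\lessdot e_{2,2}$ forces the labels $1<t+1<t+2$ to occur in that order, every linear extension contains a $123$ once $s,t\geq 2$, so $A_{123}=0$. And the identity is the unique $132$-avoider: if $w$ is not the identity and $k$ is the first index with $w_k\neq k$, then $k\geq 2$ (because $w_1=1$), so $w_{k-1}=k-1$, $w_k>k$, and the label $k$ appears later, giving the $132$ pattern $(k-1,\,w_k,\,k)$.

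For $A_{213}=A_{231}=t^{s-1}$ I would establish, for each $\tau\in\{213,231\}$, the recurrence $A_\tau(K_{s,t}^\beta)=t\,A_\tau(K_{s-1,t}^\beta)$ together with the base case $A_\tau(K_{1,t}^\beta)=1$. Peel off the first tooth $A=\{1,\dots,t\}$ — an increasing chain of the $t$ smallest labels — from the comb $B$ on $\{t+1,\dots,st\}$, a copy of $K_{s-1,t}^\beta$ that $w$ forces to avoid $\tau$. A short check of the roles an $A$-label can play shows that no element of $A$ can be the ``$2$'' of a $213$ (respectively the ``$2$'' or ``$3$'' of a $231$), so the only new patterns arise from slotting a free label $2,\dots,t$ into a bad gap of the $B$-word. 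The key point is that exactly two gaps are admissible — before and after all of $B$ for $213$, and the two gaps flanking the first label of $B$ for $231$ — every interior gap being eliminated because the minimum label $t+1$ of $B$ must come first in $B$. Placing the $t-1$ free labels into two gaps in increasing order gives exactly $t$ possibilities, which is the desired factor.

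The two Fuss–Catalan identities carry the real difficulty. For $A_{312}(K_{s,t}^\beta)$ I would aim at the functional equation $F_t=1+x\,F_t^{\,t+1}$ for $F_t(x)=\sum_{s\geq0}A_{312}(K_{s,t}^\beta)x^s$, whose power-series solution has $[x^s]F_t=\frac{1}{st+1}\binom{s(t+1)}{s}$. Peeling the first tooth, the $312$ role analysis forces the labels $1,\dots,t-1$ to form a prefix and lets only the top label $t$ move past the second tooth, and then only to a \emph{clean split} of $B$ — a point beyond which every label exceeds every earlier one. Since the smallest labels of a $\beta$-comb form an order ideal, such a split cuts $B$ into a lower uneven $\beta$-comb and an upper remainder; carrying out this recursion forces one to work with the $\beta$-analogues of the uneven combs $U^\alpha_{\mathrm{spine}=s,n}$ of the excerpt, and I expect organizing that two-parameter recursion — and checking the split is a pattern-preserving bijection — to be the main obstacle. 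A cleaner alternative, if it can be pushed through, is a direct bijection from $312$-avoiding extensions to ballot lattice paths using one north step and $t$ east steps per tooth, where the cycle lemma yields the Fuss–Catalan number outright.

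Finally, $A_{321}(K_{s,2}^\beta)=\frac{1}{2s+1}\binom{3s}{s}$ coincides with $A_{312}(K_{s,2}^\beta)$, so one option is to build an explicit bijection between the $321$- and $312$-avoiding linear extensions of $K_{s,2}^\beta$, and another is a direct count. When $t=2$ a linear extension interleaves the increasing odd labels $1,3,\dots,2s-1$ with the even labels $2,4,\dots,2s$ (each $2i$ becoming available once $2i-1$ has been placed); since the odd labels are increasing, any decreasing triple uses at least two even labels. I would encode such an extension by a lattice path recording, for each even label, its position among the odds, and then identify the paths counted by the ternary-tree number $\frac{1}{2s+1}\binom{3s}{s}$. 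I expect $321$ to be the hardest case, because a longest-decreasing-subsequence condition, unlike the other length-$3$ patterns, does not localize to a single peeled tooth, so the encoding must be engineered globally rather than read off a one-tooth recurrence.
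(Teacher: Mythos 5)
This theorem is one of Yakoubov's results, which the paper quotes in Section 2 with all proofs explicitly omitted, so there is no in-paper proof to compare against; I am therefore judging your argument on its own terms. Your first three bullets are correct and essentially complete. The chain $e_{1,1}\lessdot e_{2,1}\lessdot e_{2,2}$ carries labels $1<t+1<t+2$, so $A_{123}=0$ once $s,t\geq 2$; minimality of $e_{1,1}$ gives $w_1=1$, and your triple $(k-1,w_k,k)$ correctly shows every non-identity extension contains $132$. For $\tau\in\{213,231\}$ the tooth-peeling recurrence is sound: since the smallest label $t+1$ of $B$ must head the word of $B$, an interior placement of a free label creates the mixed pattern $(t+1,\text{free},c)$ with $c>t+1$ (a $213$), and a placement beyond the second $B$-entry creates $(t+1,v_j,\text{free})$ (a $231$); so exactly the two gaps you name survive in each case, distributing the chain $2<\cdots<t$ into them gives the factor $t$, and $A_\tau(K_{s,t}^\beta)=t\,A_\tau(K_{s-1,t}^\beta)$ with $A_\tau(K_{1,t}^\beta)=1$ yields $t^{s-1}$.

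The last two bullets, however, are research plans rather than proofs, and they are where all the content of the theorem lies. For $312$, your structural claims are true (the prefix $12\cdots(t-1)$ is forced, since a minimal violation $w_i\geq i+2$ with $i\leq t-1$ produces the pattern $(w_i,i,i+1)$; and $t$ must sit at a clean split of the word of $B$), but this reduction by itself gives only $A_{312}(K_{s,t}^\beta)=\sum_v \mathrm{cs}(v)$, the sum over $312$-avoiding extensions $v$ of $K_{s-1,t}^\beta$ of the number of clean splits of $v$. Because $\mathrm{cs}(v)$ varies with $v$, this recursion does not close over the single sequence $\bigl(A_{312}(K_{m,t}^\beta)\bigr)_m$, and passing from it to $F_t=1+xF_t^{t+1}$ is precisely the missing hard step: one needs either a catalytic variable tracking $\mathrm{cs}$, or the ideal/filter analysis you allude to, whose filter piece is moreover not a comb but a disjoint union of a chain with a smaller comb, so even the family of posets does not close under the decomposition. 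The ballot-path/cycle-lemma alternative is only named, never constructed. For $A_{321}(K_{s,2}^\beta)$ nothing is established: the numerical coincidence with $A_{312}(K_{s,2}^\beta)$ is an observation, not a bijection, and the proposed encoding (recording each even label's position among the odds) is unspecified and never matched to $\frac{1}{2s+1}\binom{3s}{s}$. As written, the proposal proves the first three bullets only.
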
 

Yakoubov leaves the enumeration of the $321$-avoiding linear extensions of $K_{s,t}^\beta$ for $t\geq 3$ as an open problem. In Section 3, we define numbers $F_{i,j}(k)$ recursively and give a formula for $A_{321}(K_{s,t}^\beta)$ involving the numbers $F_{i,j}(k)$. We then show that \[\lim_{t\to\infty}\sqrt[t]{A_{321}(K_{s,t}^\beta)}=2^s\] for each fixed $s\geq 2$. 

The following theorem focuses on the enumeration of linear extensions of combs that avoid two different length-$3$ patterns. 
\begin{theorem}[Yakoubov, \cite{Yakoubov}]\label{ThmY3}
If $s,t\geq 2$ and $\tau\in S_3$, then $A_{123,\tau}(K_{s,t}^\alpha)=A_{123,\tau}(K_{s,t}^\beta)=0$. If $\tau\in\{213,231,312,321\}$, then $A_{132,\tau}(K_{s,t}^\alpha)=A_{132,\tau}(K_{s,t}^\beta)=1$. Furthermore, 
\begin{itemize}
\item $A_{213,231}(K_{s,t}^\alpha)=A_{213,312}(K_{s,t}^\alpha)=A_{213,312}(K_{s,t}^\beta)=A_{231,312}(K_{s,t}^\beta)=2^{s-1}$; 
\item $A_{213,321}(K_{s,t}^\alpha)={s\choose 2}+1$; 
\item \begin{equation*}
\begin{split}
A_{312,321}(U_{\text{spine}=s,n}^\alpha) & = A_{231,312}(U_{\text{spine}=s,n}^\alpha) \\
 & =\begin{cases} 1, & \mbox{if } n\leq s; \\ 2^{n-s}, & \mbox{if } s<n<2s; \\ 2A_{231,312}(U_{\text{spine}=s,n-1}^\alpha)-A_{231,312}(U_{\text{spine}=s,n-1-s}^\alpha), & \mbox{if } 2s\leq n; \end{cases}
\end{split}
\end{equation*} 
\item $A_{213,231}(K_{s,t}^\beta)=1$;
\item $A_{213,321}(K_{s,t}^\beta)=(s-1)(t-1)+1$;
\item $A_{231,321}(K_{s,t}^\beta)=t^{s-1}$;
\item $A_{312,321}(K_{s,t}^\beta)=(t+1)^{s-1}$.

\end{itemize}  
\end{theorem}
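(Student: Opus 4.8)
The plan is to split the statement into three groups: the identities equal to $0$, those equal to $1$, and the genuinely enumerative ones. The vanishing identities are immediate from Theorem~\ref{ThmY1} and Theorem~\ref{ThmY2}: since $A_{123}(K_{s,t}^\alpha)=A_{123}(K_{s,t}^\beta)=0$ when $s,t\geq 2$, there are no $123$-avoiding linear extensions at all, hence none avoiding $123$ together with any $\tau$. For the identities equal to $1$, I would observe that both labelings are \emph{natural}: every covering relation of $K_{s,t}$ increases the label, so the identity permutation $1\,2\cdots st$ is itself a linear extension of $K_{s,t}^\alpha$ and of $K_{s,t}^\beta$. Being increasing, it avoids each of $132,213,231,312,321$. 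Since Theorem~\ref{ThmY1} and Theorem~\ref{ThmY2} give $A_{132}=1$, the identity is \emph{the} unique $132$-avoiding linear extension, so it is automatically the unique one avoiding $132$ together with any $\tau\in\{213,231,312,321\}$; this disposes of every $A_{132,\tau}=1$ claim. The remaining unit value $A_{213,231}(K_{s,t}^\beta)=1$ I would obtain by showing directly that the identity is the only linear extension of $K^\beta_{s,t}$ avoiding both $213$ and $231$, using the block-by-tooth structure of the $\beta$-labeling.

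For the enumerative parts I would first record the structure of a linear extension: the element $e_{1,1}$ (label $1$) is the unique minimum, and a linear extension is built by repeatedly appending a currently minimal element, i.e. the next unused cell of some tooth whose lower portion has already been placed. The two labelings differ sharply: under $\alpha$ the columns carry consecutive blocks of labels, whereas under $\beta$ the teeth do. I would then invoke standard characterizations of the relevant pattern pairs. A permutation avoids $\{213,231\}$ iff each entry, read left to right, is the smallest or the largest of the remaining values; it avoids $\{213,312\}$ iff it is \emph{unimodal} (increasing then decreasing); and it avoids $\{231,312\}$ iff it has the analogous dual description. In each case the unconstrained count on $S_n$ is $2^{n-1}$. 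The task is then to intersect these descriptions with the comb constraint. For the $\alpha$-comb I expect to show that the forced increasing run along the spine cannot participate in a forbidden pattern, and that the only genuine freedom is a single binary choice attached to each of the $s-1$ spine steps, giving $2^{s-1}$ and, crucially, no dependence on $t$. For the $\beta$-comb the same characterizations localize to individual teeth, where each of the $s-1$ teeth beyond the first contributes one independent choice, again producing $2^{s-1}$ for $\{213,312\}$ and $\{231,312\}$.

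For the polynomial counts I would use that avoiding $321$ means having no decreasing subsequence of length three; combined with $213$-avoidance this forces an extremely rigid, almost-increasing shape, and counting the admissible placements of the single allowed inversion block yields $\binom{s}{2}+1$ for $K^\alpha$ and $(s-1)(t-1)+1$ for $K^\beta$. Several joint counts coincide with single-pattern counts from Theorem~\ref{ThmY2}; for instance $A_{231,321}(K^\beta_{s,t})=t^{s-1}=A_{231}(K^\beta_{s,t})$. For these I would prove the cleaner statement that, on these particular posets, avoidance of one pattern forces avoidance of the other, so that no new counting is required, and a similar reduction should give $A_{312,321}(K^\beta_{s,t})=(t+1)^{s-1}$ from the enumeration of $312$-avoiders.

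The hardest part is the recurrence for $A_{312,321}(U_{\text{spine}=s,n}^\alpha)=A_{231,312}(U_{\text{spine}=s,n}^\alpha)$. Here I would induct on $n$ through the uneven combs, peeling off the cell of label $n$ (the most recently annexed element) and analyzing how its removal affects a doubly-avoiding extension in the three regimes $n\leq s$, $s<n<2s$, and $2s\leq n$; the regime boundaries reflect when the newest cell first rises above the spine and when full teeth of length two appear. Establishing the three-case recurrence will require a delicate analysis of where label $n$ can sit relative to the forbidden patterns, and separately one must prove equality of the two statistics $A_{312,321}$ and $A_{231,312}$ on every uneven comb — I expect this to follow either from a direct bijection between the two families of extensions or, more likely, from showing that both satisfy the same recurrence and initial conditions. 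This coupled induction, together with the verification that the forced increasing spine prefix never straddles a forbidden pattern, is where I anticipate the real work to lie.
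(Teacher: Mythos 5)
This statement is one you cannot check against this paper at all: Theorem~\ref{ThmY3} is quoted from Yakoubov \cite{Yakoubov} in the survey portion of the paper (Section~2), which states explicitly that it summarizes \cite{Yakoubov} ``omitting all proofs.'' The paper under review proves none of the items in this theorem (its own contribution in this direction is the \emph{missing} case $A_{231,321}(K_{s,t}^\alpha)$, handled later in Theorem~\ref{Thm7}), so your proposal can only be assessed on its own merits, not compared to an in-paper argument. On those merits, the first two groups of your plan are sound and complete as stated: the vanishing identities do follow trivially from $A_{123}=0$ in Theorems~\ref{ThmY1} and~\ref{ThmY2}, and since both labelings are natural (every covering relation increases the label), the identity permutation is a linear extension avoiding $132$; uniqueness from $A_{132}=1$ then gives every $A_{132,\tau}=1$ claim at once. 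Your characterizations of the pair classes (each suffix entry a min or max for $\{213,231\}$; unimodal for $\{213,312\}$; layered for $\{231,312\}$) are also correct and are the right tools for the $2^{s-1}$ entries.

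There is, however, one concrete error. You propose to get $A_{312,321}(K_{s,t}^\beta)=(t+1)^{s-1}$ by ``a similar reduction'' to the one you use for $\{231,321\}$, i.e.\ by showing that on $K_{s,t}^\beta$ avoidance of $312$ forces avoidance of $321$, so that the count of Theorem~\ref{ThmY2} carries over. That reduction is valid for $\{231,321\}$ (both counts equal $t^{s-1}$) and coincidentally checks out when $s=2$, but it is false in general: by Theorem~\ref{ThmY2}, $A_{312}(K_{s,t}^\beta)=\frac{1}{st+1}\binom{s(t+1)}{s}$, which for $s=3$, $t=2$ equals $12$, while $(t+1)^{s-1}=9$. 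So there exist $312$-avoiding linear extensions containing $321$, and this item needs its own count rather than a reduction. Beyond that, the genuinely hard parts of the theorem --- the $\binom{s}{2}+1$ and $(s-1)(t-1)+1$ formulas, and especially the three-case recurrence for $A_{312,321}(U_{\text{spine}=s,n}^\alpha)=A_{231,312}(U_{\text{spine}=s,n}^\alpha)$ together with the equality of those two quantities --- remain plans rather than proofs in your write-up: you correctly identify where the work lies (peeling off the entry $n$, coupling the two recurrences), but none of that analysis is carried out, so the proposal as it stands is an outline with one wrong step, not a proof.
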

Yakoubov did not enumerate the linear extensions of $K_{s,t}^\alpha$ avoiding $231$ and $321$. She did, however, make the following conjecture. 
\begin{conjecture}[Yakoubov, \cite{Yakoubov}]\label{Conj2}
For all $s\geq 1$, $A_{231,321}(K_{s,2}^\alpha)=2^{s-1}(s-1)+1$. 
\end{conjecture}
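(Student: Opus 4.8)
The plan is to work directly from the combinatorial meaning of avoidance. First I would record the elementary fact that a permutation avoids both $231$ and $321$ if and only if no entry is preceded by two larger entries; equivalently, every entry has at most one larger entry to its left. This is because $231$ and $321$ are exactly the two length-$3$ patterns whose final entry is the minimum, so forbidding both is the same as forbidding a position that has two larger entries before it. This reformulation is the tool that lets me verify avoidance entry-by-entry rather than hunting for patterns.

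Next I would translate this into the structure of $K_{s,2}^\alpha$. Under the $\alpha$-labeling the spine carries the labels $1<2<\cdots<s$ (which must occur in this order in any linear extension), and the teeth carry $s+1,\ldots,2s$, with $s+i$ required to follow $i$. Since every tooth label exceeds every spine label, the number of larger entries to the left of a spine label $j$ equals the number of teeth preceding it. Writing $p_j$ for the position of spine label $j$, this count is $p_j-j$, so avoidance forces $p_j-j\le 1$; combined with $p_j\ge j$ this gives $p_j\in\{j,j+1\}$. Because the number of teeth seen before spine label $j$ is weakly increasing in $j$ and lies in $\{0,1\}$, there is a single threshold $m$: spine labels $1,\dots,m$ occupy positions $1,\dots,m$, one ``early'' tooth occupies position $m+1$, spine labels $m+1,\dots,s$ fill positions $m+2,\dots,s+1$, and the remaining $s-1$ teeth fill positions $s+2,\dots,2s$. (The case $m=s$ places all teeth at the end; there the spine labels each have no larger entry to the left, so avoidance reduces to the teeth forming a $\{231,321\}$-avoiding sequence, contributing $2^{s-1}$. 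The case $m=0$ is impossible, since a tooth cannot precede its spine label.)

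With this rigid skeleton, the remaining task is to count, for each threshold $1\le m\le s-1$, the admissible tooth placements. The early tooth must be $s+e$ with $e\le m$, and the crux is counting arrangements of the trailing teeth; this is where I expect the main obstacle. The early tooth lies left of every trailing tooth, so it already supplies one larger entry for each trailing tooth $s+i$ with $i<e$, forcing every such tooth to precede all larger trailing teeth. I would show this pins the trailing teeth of index $1,\dots,e-1$ into the first $e-1$ trailing slots in increasing order, after which the teeth of index $e+1,\dots,s$ may be placed in any $\{231,321\}$-avoiding order among themselves, giving $2^{s-e-1}$ possibilities independent of $m$. Summing over the early-tooth index and threshold (the index $e$ occurs for each $m$ with $e\le m\le s-1$, i.e.\ $s-e$ times) and adding the $m=s$ contribution yields
\[
A_{231,321}(K_{s,2}^\alpha)=2^{s-1}+\sum_{e=1}^{s-1}(s-e)\,2^{s-e-1}.
\]
Finally I would evaluate the sum via the identity $\sum_{d=1}^{n} d\,2^{d-1}=(n-1)2^{n}+1$ with $n=s-1$, obtaining $2^{s-1}+(s-2)2^{s-1}+1=(s-1)2^{s-1}+1$, as conjectured.
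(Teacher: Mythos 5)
Your proposal is correct, and it takes a genuinely different route from the paper. The paper never analyzes $K_{s,2}^\alpha$ directly: it proves (Theorem \ref{Thm7}) the recurrence $A_{231,321}(U_{\text{spine}=s,n}^\alpha)=\sum_{j=1}^{s}A_{231,321}(U_{\text{spine}=s,n-j}^\alpha)$ for all uneven combs with $n>s$, by locating the smallest $j$ such that $n-j$ precedes $n$ and showing the extension must end in $n(n-j+1)(n-j+2)\cdots(n-1)$; the conjecture then falls out by an easy induction giving $A_{231,321}(U_{\text{spine}=s,s+\ell}^\alpha)=2^{\ell-1}(s-1)+1$. You instead exploit the characterization that avoiding $\{231,321\}$ means no entry has two larger entries to its left, deduce the rigid skeleton (spine prefix, at most one ``early'' tooth, spine suffix, trailing teeth), and count completions directly, summing $2^{s-1}+\sum_{e=1}^{s-1}(s-e)2^{s-e-1}$ in closed form. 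Your argument is sound: the threshold structure, the impossibility of $m=0$, the constraint $e\le m$, the forcing of teeth $s+1,\ldots,s+e-1$ into increasing order ahead of the larger trailing teeth, and the final summation identity all check out (and agree with the paper's values for $s=1,2,3$). What the paper's route buys is generality: the recurrence handles every $U_{\text{spine}=s,n}^\alpha$, hence every $K_{s,t}^\alpha$, settling the full open problem rather than only the $t=2$ conjecture. What your route buys is an explicit structural description of the avoiding extensions and a one-shot closed-form count with no recurrence or induction on $n$. One small point: you invoke (twice) the fact that there are $2^{n-1}$ permutations of length $n$ avoiding $\{231,321\}$; this is classical (it is in Simion--Schmidt, which the paper cites as \cite{Simion}) and also follows quickly from your own ``at most one larger entry to the left'' reformulation, but you should either cite it or prove it, since your whole count rests on it.
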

In the following section, we prove that $A_{231,321}(U_{\text{spine}=s,n}^\alpha)=\sum_{j=1}^sA_{231,321}(U_{\text{spine}=s,n-j}^\alpha)$ for all $n>s$. Conjecture \ref{Conj2} will then follow as an immediate consequence. 

\section{New Enumerative Results}
We now settle many of the open problems that Yakoubov proposed. Let $C_m=\frac{1}{m+1}{2m\choose m}$ denote the $m^\text{th}$ Catalan number, and let $C(x)=\dfrac{1-\sqrt{1-4x}}{2x}=\displaystyle{\sum_{m\geq 0}C_mx^m}$. 
\begin{center}
\underline{\bf 3.1. $A_{231}(K_{s,t}^\alpha)$}
\end{center}

To begin, let us enumerate $231$-avoiding linear extensions of $U_{\text{spine}=s,n}^\alpha$. When decomposing these linear extensions into simpler parts, we will obtain $231$-avoiding permutations that start with their largest entries and have the entry $1$ in a fixed position. Therefore, we need the following lemma. 

\begin{lemma}\label{Lem1}
Fix $k\geq 1$. Let $X_0(k,m)$ be the set of $231$-avoiding permutations $p=p_1p_2\cdots p_m\in S_m$ such that $p_1=m$ and $p_k=1$. We have \[\sum_{m\geq 1}\vert X_0(k,m)\vert x^m=x^kC(x)^{k-1}.\]
\end{lemma}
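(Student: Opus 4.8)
The plan is to strip off the forced leading entry and reduce to a cleaner statistic, namely the position of the value $1$. Since $p_1=m$ is the largest entry, it can only ever play the role of the middle (``$3$'') letter of a $231$ pattern; but that letter occupies the middle index of the pattern, whereas $p_1$ sits at the very first index. Hence $p_1$ lies in no occurrence of $231$, so $p=p_1\cdots p_m$ avoids $231$ if and only if the word $p_2\cdots p_m$ does. First I would record this as a bijection: deleting $p_1$ and viewing $p_2\cdots p_m$ as a permutation $q$ of $[m-1]$ carries $X_0(k,m)$ onto the set of $231$-avoiding permutations of $[m-1]$ whose value $1$ occupies position $k-1$. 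Writing $g(r,N)$ for the number of $231$-avoiding permutations of $[N]$ with the value $1$ in position $r$, this gives $|X_0(k,m)|=g(k-1,m-1)$ for $k\geq 2$, while the case $k=1$ forces $m=1$ and contributes only the term $x$.

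Next I would analyze $g(r,N)$ through the standard first-entry decomposition of $231$-avoiding permutations. If $q$ is such a permutation with $q_1=\ell$, then no entry exceeding $\ell$ may precede an entry smaller than $\ell$, for otherwise $q_1$, that larger entry, and that smaller entry would form a $231$ pattern. Consequently the entries $\{1,\dots,\ell-1\}$ fill positions $2,\dots,\ell$ as a $231$-avoiding block $B_1$, the entries $\{\ell+1,\dots,N\}$ fill positions $\ell+1,\dots,N$ as an arbitrary $231$-avoiding block $B_2$, and one checks that every such pair of blocks reassembles without creating a cross-block pattern. The value $1$ always lands in $B_1$, and its position in $q$ exceeds its position in $B_1$ by exactly $1$, while $B_2$ is entirely unconstrained. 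This shift-by-one behavior of the position of $1$ is the mechanism that will produce the product structure.

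Translating this into generating functions is the crux. Setting $G_r(x)=\sum_{N}g(r,N)x^N$, the decomposition yields the recurrence $G_r(x)=x\,G_{r-1}(x)\,C(x)$ for $r\geq 1$, where the factor $x$ accounts for the entry $q_1$, the factor $G_{r-1}(x)$ records $B_1$ (whose value $1$ now sits in position $r-1$), and the factor $C(x)$ records the unrestricted block $B_2$, using the standard fact that $231$-avoiding permutations are counted by the Catalan numbers with generating function $C(x)$. The natural base case is $G_0(x)=1$, matching the degenerate situation $\ell=1$ in which $B_1$ is empty and $q_1=1$ already; I would verify separately that this gives $g(1,N)=C_{N-1}$ and hence $G_1(x)=xC(x)$, confirming the convention. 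Iterating the recurrence gives $G_r(x)=(xC(x))^r$, and reinserting the extra $x$ from the deleted leading entry produces \[\sum_{m\geq 1}|X_0(k,m)|x^m = x\,G_{k-1}(x)=x^kC(x)^{k-1},\] which also reproduces the value $x$ in the degenerate case $k=1$.

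The main obstacle I anticipate is not the algebra but pinning down the combinatorial decomposition cleanly. One must verify that the first-entry split is a genuine bijection, so that every admissible pair $(B_1,B_2)$ reassembles to a $231$-avoiding permutation with no $231$ pattern straddling the two blocks, and one must track carefully that the value $1$ always falls in $B_1$ with its index increasing by exactly $1$, so that the recurrence couples $G_r$ to $G_{r-1}$ alone rather than to a mixture of the $G_i$. Correctly handling the base and degenerate cases ($\ell=1$, $r=1$, and the separate instance $k=1$) so that the convention $G_0(x)=1$ is justified is the other point that will require care.
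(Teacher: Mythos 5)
Your proof is correct, and after the shared first step it takes a genuinely different route from the paper's. Both arguments begin identically: since $p_1=m$ sits in the first position it can lie in no occurrence of $231$, so deleting it shows $\vert X_0(k,m)\vert$ equals the number of $231$-avoiding permutations of $[m-1]$ with the value $1$ in position $k-1$ (with $k=1$ degenerating to the single term $x$). From there the paper does not compute this count directly: it transfers it, via reverse and complement together with the Simion--Schmidt bijection (using that this bijection preserves the values and positions of left-to-right minima, hence the position of $1$), to $u(k-1,m-1)$, the number of $123$-avoiding permutations in $S_{m-1}$ with the maximum in position $k-1$, and then quotes the Connolly--Gabor--Godbole generating function $\sum_{n}u(\ell,n)x^n=x^\ell C(x)^\ell$. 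You instead evaluate the count from scratch: the first-entry decomposition $q=\ell\,B_1\,B_2$ of a $231$-avoiding permutation, with the value $1$ forced into $B_1$ and its position shifted down by exactly one, yields the recurrence $G_r(x)=x\,C(x)\,G_{r-1}(x)$ with $G_0(x)=1$, hence $G_{k-1}(x)=(xC(x))^{k-1}$ and the lemma. Your decomposition and reassembly check are sound (the cross-block cases are excluded because every entry of $B_1$ is smaller than $\ell$ and every entry of $B_2$ is larger), and your handling of the conventions at $r=0$, $r=1$, and $k=1$ is consistent. What the paper's route buys is brevity by leaning on the literature; what yours buys is a self-contained, elementary argument that in effect reproves the cited result in the $231$ setting and makes the origin of the factor $x^kC(x)^{k-1}$ combinatorially transparent.
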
  
\begin{proof}
The lemma is obvious when $k=1$, so assume $k\geq 2$. Let $u(\ell,n)$ be the number of $123$-avoiding permutations in $S_n$ in which the entry $n$ appears in the $\ell^\text{th}$ position. For $\ell\geq 1$, Connolly, Gabor, and Godbole have shown that $\sum_{n\geq 0}u(\ell,n)x^n=x^\ell C(x)^\ell$ \cite[Corollary 1]{Connolly}. We will show that $\vert X_0(k,m)\vert=u(k-1,m-1)$, and the lemma will follow immediately. 

A \emph{left-to-right minimum} of a permutation $\pi=\pi_1\pi_2\cdots\pi_n$ is an entry $\pi_i$ such that $\pi_i<\pi_j$ for all $j<i$. Simion and Schmidt constructed a specific bijection $\psi$ (now called the Simion-Schmidt bijection) from the set of $123$-avoiding permutations in $S_n$ to the set of $132$-avoiding permutations in $S_n$ \cite{Simion}. The Simion-Schmidt bijection has the additional property that it fixes the values and the positions of the left-to-right minima of its argument. In particular, the position of $1$ in $\pi$ is the same as the position of $1$ in $\psi(\pi)$. 

For each $\pi=\pi_1\pi_2\cdots\pi_n\in S_n$, let $\pi^c$ be the complement of $\pi$. That is, the $i^\text{th}$ entry of $\pi^c$ is $n+1-\pi_i$. Furthermore, let $\pi^r=\pi_n\pi_{n-1}\cdots\pi_1$ be the reverse of $\pi$.    By simply removing the largest entry of a permutation in $X_0(k,m)$, we see that $\vert X_0(k,m)\vert$ is equal to the number of $231$-avoiding permutations in $S_{m-1}$ whose $(k-1)^\text{th}$ entry is $1$. 
Observe that $\pi$ is a $123$-avoiding permutation in $S_{m-1}$ whose $(k-1)^\text{th}$ entry is $m-1$ if and only if $(\psi((\pi^c)^r))^r$ is a $231$-avoiding permutation in $S_{m-1}$ whose $(k-1)^\text{th}$ entry is $1$. It follows that $\vert X_0(k,m)\vert=u(k-1,m-1)$.
\end{proof}
\begin{theorem}\label{Thm1}
For each $h\geq 0$, \[\sum_{s\geq 0}A_{231}(U_{\text{spine}=s,2s-h}^\alpha)x^s=(x\cdot C(x))^hC(x\cdot C(x)).\] For $n\geq 2s$, we have $A_{231}(U_{\text{spine}=s,n}^\alpha)=\displaystyle{\sum_{j=1}^sC_{j-1}A_{231}(U_{\text{spine}=s,n-j}^\alpha)}$.  
\end{theorem}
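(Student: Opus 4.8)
The plan is to set up a recursive decomposition of the $231$-avoiding linear extensions of $U_{\text{spine}=s,n}^\alpha$ based on the position of the largest label $n$, and then convert the resulting recurrence into the stated generating function identity. First I would recall the structure of the $\alpha$-labeling: the spine elements $e_{1,1},\ldots,e_{s,1}$ receive the smallest labels $1,\ldots,s$, and the labels increase by $s$ as one moves up each tooth, so column $j$ (the $j$th level of the teeth) carries the labels $(j-1)s+1,\ldots,(j-1)s+s$. In a linear extension viewed as a permutation, the entry $n$ is the largest label present; since $n$ sits at the top of whichever tooth it belongs to, it must appear after all smaller elements of its own tooth and can only be preceded or followed according to the comb's order relations on the other teeth. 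The key observation I would exploit is that in a $231$-avoiding permutation the entry $n$ forces everything appearing before it to be smaller than everything appearing after it in a controlled way, which lets me split the linear extension around the position of $n$.

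Next I would aim to prove the recurrence $A_{231}(U_{\text{spine}=s,n}^\alpha)=\sum_{j=1}^s C_{j-1}A_{231}(U_{\text{spine}=s,n-j}^\alpha)$ for $n\geq 2s$ directly. The idea is to condition on the position of the smallest ``top'' label, or equivalently to peel off a block governed by Lemma~\ref{Lem1}: the lemma enumerates $231$-avoiding permutations that begin with their maximum and place $1$ in position $k$, with generating function $x^kC(x)^{k-1}$, and its coefficients are exactly the $C_{j-1}$ that appear in the recurrence (the term $x^kC(x)^{k-1}$ contributes $C_{j-1}$-type weights once one tracks how many spine elements are consumed). Concretely, I expect that removing the portion of the extension corresponding to the first full column of teeth leaves a $231$-avoiding linear extension of a smaller uneven comb $U_{\text{spine}=s,n-j}^\alpha$, where $j$ ranges over the number of spine positions used, and the number of ways to interleave the removed portion is the Catalan factor $C_{j-1}$ coming from Lemma~\ref{Lem1}. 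Establishing that this peeling is a bijection — that every $231$-avoiding extension decomposes uniquely and that every choice of a smaller extension together with a valid Catalan-indexed prefix recombines into a valid $231$-avoiding extension — is the combinatorial heart of the argument.

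With the recurrence in hand, I would derive the generating function statement. Let $G_h(x)=\sum_{s\geq 0}A_{231}(U_{\text{spine}=s,2s-h}^\alpha)x^s$. The substitution $n=2s-h$ makes the recurrence relate $G_h$ to the $G_{h'}$ for various $h'$; I would translate the convolution $\sum_{j=1}^s C_{j-1}A_{231}(U_{\text{spine}=s,n-j}^\alpha)$ into a product with the generating function $xC(x)$ of the shifted Catalan numbers $\sum_{j\geq 1}C_{j-1}x^j = xC(x)$. Matching the index shift $n-j=2s-h-j$ against $2(s)-(h+j)$ versus $2(s-1)-(\cdots)$ will produce a relation of the form $G_h = xC(x)\,G_{h-1}$ plus boundary contributions, yielding $G_h(x)=(xC(x))^h G_0(x)$, and a functional equation for $G_0$ of the form $G_0 = 1 + xC(x)\,G_0\cdot(\text{something})$ that identifies $G_0(x)=C(xC(x))$ via the defining identity $C(y)=1+yC(y)^2$ with $y=xC(x)$. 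The hard part will be bookkeeping the boundary cases $n<2s$ and $n\leq s$ correctly so that the base terms of the recurrence assemble into precisely $C(xC(x))$ rather than some shifted or perturbed series; getting the index arithmetic and the initial conditions to line up exactly is where the argument is most error-prone, whereas the Catalan functional equation then closes everything cleanly.
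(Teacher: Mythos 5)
Your plan for the second statement (the recurrence) is essentially the paper's argument: split the extension around the largest entry $n$, noting that $231$-avoidance forces everything before $n$ to be smaller than everything after it, so $q=q'nq''$ with $q'$ an extension of $U_{\text{spine}=s,n-j}^\alpha$ and $q''$ an arbitrary $231$-avoiding permutation of $\{n-j+1,\ldots,n-1\}$. One correction there: the factor $C_{j-1}$ is simply the Catalan count of $231$-avoiding permutations of $j-1$ unconstrained letters; it does not come from Lemma \ref{Lem1}, whose coefficients are ballot-type numbers, not $C_{j-1}$.

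The fatal gap is your third step: the generating-function identity cannot be derived from the recurrence, because the two statements live in essentially disjoint regimes. The recurrence holds only for $n\geq 2s$, whereas $G_h(x)=\sum_{s}A_{231}(U_{\text{spine}=s,2s-h}^\alpha)x^s$ for $h\geq 1$ involves exclusively values with $n=2s-h<2s$, and in that regime the recurrence is false. Concretely, for $s=3$, $n=4$ one checks directly that $A_{231}(U_{\text{spine}=3,4}^\alpha)=3$ (the extensions are $1234$, $1243$, $1423$), while the recurrence would predict $C_0\cdot 1+C_1\cdot 1+C_2\cdot 1=4$; it breaks because when $n<2s$ some of the labels $n-j+1,\ldots,n-1$ placed after $n$ are spine labels, which the poset forces into increasing order, so not all $C_{j-1}$ choices of $q''$ are legal. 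Consequently the recurrence yields only one relation among the series $G_h$ (at $h=0$, relating $[x^s]G_0$ to $[x^s]G_1,\ldots,[x^s]G_s$); the values with $s\leq n<2s$ are precisely the initial conditions that the recurrence cannot produce, so no functional equation of the form $G_h=xC(x)G_{h-1}+\cdots$ is available. This is why the paper proves the first statement independently of the second: it constructs a bijection sending each $231$-avoiding extension of $U_{\text{spine}=s,2s-h}^\alpha$ to a triple consisting of a block counted by Lemma \ref{Lem1} (this is where that lemma is actually used), a residual $231$-avoiding permutation counted by a Catalan number, and a subset recording how the spine labels interleave; summing $C_{s-m-h}\binom{s-m+k}{s-m}\vert X_0(k,m)\vert$ and using the binomial series together with $C(x)=1/(1-xC(x))$ then produces $(xC(x))^hC(xC(x))$. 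To repair your proof you would need to supply such an independent computation of $A_{231}(U_{\text{spine}=s,n}^\alpha)$ in the range $s\leq n\leq 2s$ — that computation is the heart of the theorem, not a bookkeeping afterthought.
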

\begin{proof}
Fix $h\geq 0$, and suppose $s\geq h+1$. Let $X_s(k,m)$ be the set of $231$-avoiding permutations of the elements of $\{s+1,s+2,\ldots,s+m\}$ whose first entry is $s+m$ and whose $k^{\text{th}}$ entry is $s+1$. Let $Y_{s,h}(m)$ be the set of $231$-avoiding permutations of $\{s+m+1,s+m+2,\ldots,2s-h\}$. Let $Z_s(k,m)$ be the set of $(s-m)$-element subsets of $\{m+1,m+2,\ldots,s+k\}$. We will give a bijection $\varphi$ from the set of $231$-avoiding linear extensions of $U_{\text{spine}=s,2s-h}^\alpha$ to $\displaystyle{\bigcup_{k\geq 1}\bigcup_{m\geq 1}}(X_s(k,m)\times Y_{s,h}(m)\times Z_s(k,m))$.

Let $\pi=\pi_1\pi_2\cdots\pi_{2s-h}$ be a $231$-avoiding linear extension of $U_{\text{spine}=s,2s-h}^\alpha$. If we remove the entries $1,2,\ldots,s$ from $\pi$, we obtain a $231$-avoiding permutation $\sigma=\sigma_1\sigma_2\cdots\sigma_{s-h}$ of the elements of $\{s+1,\ldots,2s-h\}$. Suppose $\sigma_1=s+m$ and $\sigma_k=s+1$. It follows from the fact that $\sigma$ avoids $231$ that $k\leq m$. Moreover, $\sigma_1\sigma_2\cdots\sigma_m\in X_s(k,m)$ and $\sigma_{m+1}\sigma_{m+2}\cdots\sigma_{s-h}\in Y_{s,h}(m)$. 

The entries of $\sigma_1\sigma_2\cdots\sigma_k$ must appear in decreasing order since $\sigma$ avoids $231$ and $\sigma_k=s+1$ is the smallest entry in $\sigma$. Because $\pi$ is a linear extension of $U_{\text{spine}=s,2s-h}^\alpha$, $m$ precedes $s+m=\sigma_1$ in $\pi$. We also know that the elements of $[s]$ must appear in increasing order in $\pi$. This means that $\pi_1\pi_2\cdots\pi_m=12\cdots m$. Observe that the entry $s$ must precede $\sigma_{k+1}$ in $\pi$ since otherwise, $\sigma_k\sigma_{k+1}s$ would form a $231$ pattern in $\pi$. This means that $\pi_{s+k+1}\pi_{s+k+2}\cdots\pi_{2s-h}=\sigma_{k+1}\sigma_{k+2}\cdots\sigma_{s-h}$. We deduce that $\pi_{m+1}\pi_{m+2}\cdots\pi_{s+k}$ is a permutation of $\{m+1,m+2,\ldots,s\}\cup \{\sigma_1,\sigma_2,\ldots,\sigma_k\}$. Let $T=\{j\in\{m+1,m+2,\ldots,s+k\}\colon \pi_j\in\{m+1,m+2,\ldots,s\}\}$. Note that $T\in Z_s(k,m)$. We define $\varphi(\pi)=(\sigma_1\sigma_2\cdots\sigma_m,\sigma_{m+1}\sigma_{m+2}\cdots\sigma_{s-h},T)$. 

It is straightforward to check that we may recover the $231$-avoiding linear extension $\pi=$ \\$\pi_1\pi_2\cdots\pi_{2s-h}$ from the triple \[(\sigma_1\sigma_2\cdots\sigma_m,\sigma_{m+1}\sigma_{m+2}\cdots\sigma_{s-h},T)\in X_s(k,m)\times Y_{s,h}(m)\times Z_s(k,m)\] by reversing the above procedure. Therefore, 
\begin{equation}\label{Eq1}
\sum_{s\geq h+1}A_{231}(U_{\text{spine}=s,2s-h}^\alpha)x^s=\sum_{s\geq h+1}\sum_{k\geq 1}\sum_{m\geq 1}\vert X_s(k,m)\vert\cdot\vert Y_{s,h}(m)\vert\cdot\vert Z_s(k,m)\vert x^s.
\end{equation}
From the definitions of $Y_{s,h}(m)$ and $Z_s(k,m)$ (and the fact that $231$-avoiding permutations are counted by the Catalan numbers), we see that $\vert Y_{s,h}(m)\vert=C_{s-m-h}$ and $\vert Z_s(k,m)\vert={s-m+k\choose s-m}$. Moreover, $\vert X_s(k,m)\vert=\vert X_0(k,m)\vert$, where $X_0(k,m)$ is the set defined in Lemma \ref{Lem1}. When $s<m+h$, $\vert Y_{s,h}(m)\vert=0$. Hence, we may rewrite \eqref{Eq1} as 
\[
\sum_{s\geq h+1}A_{231}(U_{\text{spine}=s,2s-h}^\alpha)x^s=\sum_{k\geq 1}\sum_{m\geq 1}\sum_{s\geq m+h}C_{s-m-h}{s-m+k\choose s-m}\vert X_0(k,m)\vert\cdot x^s.\] Substituting $n=s-m$, invoking Lemma \ref{Lem1}, and rearranging yields 
\[
\sum_{s\geq h+1}A_{231}(U_{\text{spine}=s,2s-h}^\alpha)x^s=\sum_{n\geq h}\sum_{k\geq 1}C_{n-h}{n+k\choose n}\cdot x^n\sum_{m\geq 1}\vert X_0(k,m)\vert x^m\] 
\begin{equation}\label{Eq2}
=\sum_{n\geq h}\sum_{k\geq 1}C_{n-h}{n+k\choose n}x^nx^kC(x)^{k-1}=\frac{1}{C(x)}\sum_{n\geq h}C_{n-h}x^{n}\sum_{k\geq 1}{n+k\choose n}(x\cdot C(x))^k
\end{equation} We now combine the identities $\displaystyle{\sum_{j\geq 0}{\ell+j\choose\ell}z^{j}}=\frac{1}{(1-z)^{\ell+1}}$ and $C(x)=\dfrac{1}{1-x\cdot C(x)}$ to see that \[\sum_{k\geq 1}{n+k\choose n}(x\cdot C(x))^k=\sum_{k\geq 0}{n+k\choose n}(x\cdot C(x))^{k}-1=\frac{1}{(1-x\cdot C(x))^{n+1}}-1=C(x)^{n+1}-1.\] Substituting this into \eqref{Eq2}, we find that \[\sum_{s\geq h+1}A_{231}(U_{\text{spine}=s,2s-h}^\alpha)x^s=\sum_{n\geq h}C_{n-h}(x\cdot C(x))^n-\frac{1}{C(x)}\sum_{n\geq h}C_{n-h}x^n\] \[=(x\cdot C(x))^hC(x\cdot C(x))-x^h.\] The first statement in the theorem now follows from the observation that $A_{231}(U_{\text{spine}=h,h}^\alpha)=1$ and $A_{231}(U_{\text{spine}=s,2s-h}^\alpha)=0$ for all $s\in\{0,1,\ldots,h-1\}$. 

To prove the second statement in the theorem, consider a $231$-avoiding linear extension $q=q_1q_2\cdots q_n$ of $U_{\text{spine}=s,n}^\alpha$, where $n\geq 2s$. Let $j$ be the smallest positive integer such that $n-j$ precedes $n$ in $q$. Because $q$ respects the ordering on $U_{\text{spine}=s,n}^\alpha$, $n-s$ must precede $n$ in $q$. Thus, $j\leq s$. Each $i\in\{1,2,\ldots,n-j-1\}$ must also precede $n$ in $q$ lest $(n-j)ni$ forms a $231$ pattern in $q$. This means that $q=q'nq''$, where $q'$ is a $231$-avoiding linear extension of $U_{\text{spine}=s,n-j}^\alpha$ and $q''$ is a $231$-avoiding permutation of the elements of $\{n-j+1,n-j+2,\ldots,n-1\}$. On the other hand, if $\tau'$ is any $231$-avoiding linear extension of $U_{\text{spine}=s,n-j}^\alpha$ and $\tau''$ is any $231$-avoiding permutation of $\{n-j+1,n-j+2,\ldots,n-1\}$, then $\tau'n\tau''$ is a $231$-avoiding linear extension of $U_{\text{spine}=s,n}^\alpha$. The number of $231$-avoiding permutations of  $\{n-j+1,n-j+2,\ldots,n-1\}$ is $C_{j-1}$, so the second statement in the theorem follows. 
\end{proof}
\begin{remark}\label{Rem1}
Theorem \ref{Thm1} completes the enumeration of the $231$-avoiding linear extensions of the labeled uneven combs $U_{\text{spine}=s,n}^\alpha$. Indeed, the second statement in the theorem provides a recurrence relation that can be used to compute $A_{231}(U_{\text{spine}=s,n}^\alpha)$ for any $n\geq 2s$ so long as we know the values of $A_{231}(U_{\text{spine}=s,m}^\alpha)$ for $s\leq m<2s$. We may use the first statement in the theorem to compute these initial values.  
\end{remark}

\begin{example}
In the proof of Theorem \ref{Thm1}, we showed how to decompose a $231$-avoiding linear extension $\pi$ of $U_{\text{spine}=s,2s-h}^\alpha$ into a triple \[(\sigma_1\sigma_2\cdots\sigma_m,\sigma_{m+1}\sigma_{m+2}\cdots\sigma_{s-h},T)\in X_s(k,m)\times Y_{s,h}(m)\times Z_s(k,m).\] Suppose $s=5$, $h=1$, and $\pi=123846579$. Then $\sigma=8679=(s+3)(s+1)(s+2)(s+4)$, so $k=2$ and $m=3$. Also, $T=\{j\in \{4,5,6,7\}\colon\pi_j\in\{4,5\}\}=\{5,7\}$. Therefore, $\pi$ decomposes into the triple $(867,9,\{5,7\})$. 

Suppose that we were given the triple $(867,9,\{5,7\})$ without knowing $\pi$. We could recover $\pi$ as follows. We know that $1,2,3,4,5$ appear in increasing order in $\pi$ and that the other entries appear in the order $8,6,7,9$. The entries $7$ and $9$ must appear to the right of $5$ in $\pi$ (otherwise, $675$ would be a $231$-pattern). Because $\pi$ is a linear extension of $U_{\text{spine}=5,9}^\alpha$, $3$ must precede $8$ in $\pi$. Finally, the set $T=\{5,7\}$ tells us that $\pi_5=4$ and $\pi_7=5$. This forces $\pi=123846579$. 
\end{example}

\begin{center}
\underline{\bf 3.2 $A_{312}(K_{s,t}^\alpha)$}
\end{center}

Recall from Theorem \ref{ThmY1} that Yakoubov proved that $A_{312}(K_{s,2}^\alpha)=C_{s+1}-C_s$. The following theorem generalizes this result by enumerating the $312$-avoiding linear extensions of $U_{\text{spine}=s,n}^\alpha$ for all $n,s\geq 1$. 

\begin{theorem}\label{Thm2}
Let $s$ be a positive integer. Put $d_s(0)=1$ and $d_s(k)=0$ for all integers $k<0$. For $k\geq 1$, define $d_s(k)$ by $d_s(k)=\sum_{j=1}^sd_s(j-1)d_s(k-j)$. If $n\geq s$, then \[A_{312}(U_{\text{spine}=s,n}^\alpha)=d_s(n-s+1).\]
\end{theorem}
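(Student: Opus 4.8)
The plan is to peel off the spine, reducing to a disjoint union of chains, and then set up a Catalan-style recurrence by splitting a $312$-avoiding linear extension at its smallest entry. Throughout I will use the standard recursive description of $312$-avoiding permutations: if $\pi$ is a permutation of a finite set of distinct numbers with minimum entry $\mu$, and we write $\pi = L\,\mu\,R$, then $\pi$ avoids $312$ if and only if every entry of $L$ is smaller than every entry of $R$ and both $L$ and $R$ avoid $312$. The forward direction is immediate, since an entry of $L$ exceeding an entry of $R$ would form a $312$ pattern together with $\mu$; the converse is a short case analysis on how the three positions of a hypothetical pattern distribute among $L$, $\{\mu\}$, and $R$.

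First I would establish that the spine is frozen at the front. Since $1$ is the unique minimal element of $U_{\text{spine}=s,n}^\alpha$, every linear extension begins with $1$; applying the description above inductively, and using that the spine relation forces $i+1$ to follow $i$ for $1\le i\le s-1$, I would show that any $312$-avoiding linear extension must begin with $1\,2\cdots(s-1)$. Prepending this increasing block is harmless for $312$-avoidance, so $A_{312}(U_{\text{spine}=s,n}^\alpha)$ equals the number of $312$-avoiding linear extensions of the subposet $Q$ induced on the labels $\{s,s+1,\ldots,n\}$. The crucial observation is that, under the $\alpha$-labeling, the only covering relations surviving among these labels are the tooth relations $\ell\lessdot\ell+s$; hence $Q$ is the poset on $\{s,\ldots,n\}$ in which $x<y$ precisely when $y-x$ is a positive multiple of $s$, i.e.\ a disjoint union of chains. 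Writing $b(k)$ for the number of $312$-avoiding linear extensions of this poset when it has $k$ elements (so $b(k)=A_{312}(U_{\text{spine}=s,s+k-1}^\alpha)$ and $b(0)=1$), it remains to prove $b(k)=\sum_{j=1}^s b(j-1)b(k-j)$.

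To get this recurrence I would split a $312$-avoiding linear extension $\rho$ of $Q$ at its global minimum $s$, writing $\rho=L\,s\,R$. The recursive description forces the entries of $L$ to be the $j-1$ smallest labels above $s$, namely $\{s+1,\ldots,s+j-1\}$, for some $j$. The bound $j\le s$ comes exactly from the covering relation $s\lessdot 2s$: since $2s$ must follow $s$ in any linear extension, $L$ cannot reach up to $2s$, so $s+j-1<2s$. The labels $\{s+1,\ldots,s+j-1\}$ are pairwise incomparable in $Q$ (their differences are nonzero and less than $s$), so $L$ ranges over the $312$-avoiding arrangements of a $(j-1)$-element antichain, of which there are $C_{j-1}=b(j-1)$. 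Meanwhile $R$ is a $312$-avoiding linear extension of the subposet of $Q$ on $\{s+j,\ldots,n\}$; since the order relation on $Q$ depends only on differences of labels, the downward shift $x\mapsto x-j$ is an order-isomorphism of totally ordered sets carrying this subposet onto the corresponding poset on $\{s,\ldots,n-j\}$, i.e.\ onto a copy with $k-j$ elements. As $312$-avoidance is preserved under this relabeling, $R$ contributes $b(k-j)$. The recursive description also guarantees that every choice of a $312$-avoiding $L$ and a $312$-avoiding $R$ reassembles into a genuine $312$-avoiding linear extension of $Q$, so summing over $1\le j\le s$ yields the claimed recurrence; comparing initial values then gives $b=d_s$ and hence $A_{312}(U_{\text{spine}=s,n}^\alpha)=d_s(n-s+1)$.

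The main obstacle is the bookkeeping in the middle step: one must simultaneously verify that the value-splitting dictated by $312$-avoidance is compatible with the poset constraints (which is what pins the left block to an antichain and produces the cutoff $j\le s$) and that the residual poset is genuinely a smaller instance of the same family. The clean ``difference is a multiple of $s$'' description of $Q$ is what makes the shift isomorphism transparent and keeps this step from becoming an intricate chain-by-chain comparison; identifying that description early is the key to a short proof.
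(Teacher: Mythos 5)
Your proposal is correct and takes essentially the same route as the paper: reduce to the induced subposet on $\{s,s+1,\ldots,n\}$ (the paper's $V_{s,n}^\alpha$, whose linear extensions the paper encodes as ``$s$-stratified'' permutations), then split a $312$-avoiding extension at its minimum entry, with the tooth relation forcing the left block to have fewer than $s$ entries, yielding exactly the recurrence defining $d_s$. The only cosmetic difference is that you work directly with the union-of-chains poset and a shift isomorphism, whereas the paper normalizes to $s$-stratified permutations of $S_{n-s+1}$ before splitting.
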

\begin{proof}
Choose $n\geq s$. In any linear extension of $U_{\text{spine}=s,n}^\alpha$, the entries $1,2,\ldots,s$ must form an increasing subsequence. If $\pi$ is such a linear extension that also avoids the pattern $312$, then the first $s-1$ entries of $\pi$ must be $1,2,\ldots,s-1$. This is because if $x>s-1$ and $x$ precedes $s-1$, then $x(s-1)s$ is a $312$ pattern. Therefore, $\pi=12\cdots(s-1)p$, where $p$ is a $312$-avoiding linear extension of the labeled subposet of $U_{\text{spine}=s,n}^\alpha$ obtained by removing from $U_{\text{spine}=s,n}^\alpha$ the elements whose labels are in $[s-1]$. Denote this subposet by $V_{s,n}^\alpha$. If $q$ is any $312$-avoiding linear extension of $V_{s,n}^\alpha$, then $12\cdots(s-1)q$ is a $312$-avoiding linear extension of $U_{\text{spine}=s,n}^\alpha$. Consequently, $A_{312}(U_{\text{spine}=s,n}^\alpha)=A_{312}(V_{s,n}^\alpha)$. 

Say a permutation $\sigma=\sigma_1\sigma_2\cdots\sigma_m$ of some set of integers is $s$-\emph{stratified} if $\sigma_i<\sigma_{s+i}$ for all $1\leq i\leq m-s$. The linear extensions of $V_{s,n}^\alpha$ are precisely the $s$-stratified permutations of the set $\{s,s+1,\ldots,n\}$. By decreasing the entries of such a linear extension by $s-1$, we see that $A_{312}(V_{s,n}^\alpha)$ is equal to the number of $312$-avoiding $s$-stratified permutations in $S_{n-s+1}$. We will show that the number of $312$-avoiding $s$-stratified permutations in $S_k$ is $d_s(k)$; this will complete the proof of the theorem. 

Let $\sigma=\sigma_1\sigma_2\cdots\sigma_k$ be a $312$-avoiding $s$-stratified permutation in $S_k$ for some $k\geq 1$. Suppose $\sigma_j=1$. Because $\sigma$ avoids $312$, all the entries to the left of $1$ in $\sigma$ must be smaller than all of the entries to the right of $1$. Therefore, we may write $\sigma=\tau 1\tau'$, where $\tau$ is a $312$-avoiding $s$-stratified permutation of $\{2,3,\ldots,j\}$ and $\tau'$ is a $312$-avoiding $s$-stratified permutation of $\{j+1,j+2,\ldots,k\}$. The entry $s+1$ cannot precede $1$ in $\sigma$ because $\sigma$ is $s$-stratified. This forces $j\leq s$. On the other hand, if $q$ is a $312$-avoiding $s$-stratified permutation of $\{2,3,\ldots,j\}$ and $q'$ is a $312$-avoiding $s$-stratified permutation of $\{j+1,j+2,\ldots,k\}$ for some $j\leq s$, then $q1q'$ is a $312$-avoiding $s$-stratified permutation in $S_k$. It follows that the number of $312$-avoiding $s$-stratified permutations in $S_k$ is $d_s(k)$.   
\end{proof}

\begin{center}
\underline{\bf 3.3 $A_{321}(K_{s,t}^\alpha)$}
\end{center}

In this section, we write $\{a_1,a_2,\ldots,a_k\}_<$ to denote a set of integers $a_1,a_2,\ldots,a_k$ such that $a_1<a_2<\cdots<a_k$. In other words, the subscript $``<"$ is simply used to emphasize that we have written the elements of the set in increasing order. 

We now consider $321$-avoiding linear extensions of $K_{s,t}^\alpha$. Recall that an entry $\pi_i$ of a permutation $\pi=\pi_1\pi_2\cdots\pi_n\in S_n$ is called a \emph{right-to-left minimum} if $\pi_i<\pi_j$ for all $j\in\{i+1,i+2,\ldots,n\}$. The right-to-left minima of $\pi$ necessarily appear in increasing order. If $\pi$ avoids the pattern $321$, then the entries that are not right-to-left minima also appear in increasing order. Indeed, suppose $\pi_i>\pi_j$ and $i<j$ for some entries $\pi_i,\pi_j$ that are not right-to-left minima. Since $\pi_j$ is not a right-to-left minimum, there is some $k>j$ such that $\pi_k<\pi_j$. Then $\pi_i\pi_j\pi_k$ forms a $321$ pattern in $\pi$, contrary to assumption. It follows that each $321$-avoiding permutation in $S_n$ is uniquely determined by specifying the entries that are not right-to-left minima and the positions in the permutation that those entries occupy.   

Assume $s,t\geq 2$. In any linear extension of $K_{s,t}^\alpha$, the entries $1,2,\ldots,s$ must be right-to-left minima because they appear in increasing order. Let us fix a subset $W=\{w_1,w_2,\ldots,w_k\}_<$ of $\{s+1,s+2,\ldots,st\}$. Define $\Omega_{s,t}(W)$ to be the collection of all sets of integers $I=\{i_1,i_2,\ldots,i_k\}_<$ satisfying $w_\ell-s+\ell\leq i_\ell\leq w_\ell-1$ for all $\ell$. Let $\Lambda_{s,t}(W)$ be the set of all $321$-avoiding linear extensions of $K_{s,t}^\alpha$ whose right-to-left minima are precisely the elements of $[st]\setminus W$. Our goal is to construct an injection from $\Omega_{s,t}(W)$ to $\Lambda_{s,t}(W)$ which is a bijection if $t=2$. By estimating the size of $\Omega_{s,2}(W)$, we will then be able to deduce a lower bound for $A_{321}(K_{s,2}^\alpha)$. From there, we will deduce lower bounds for $A_{321}(K_{s,t}^\alpha)$ for general $t$.  

Suppose we are given a set of integers $I=\{i_1,i_2,\ldots,i_k\}_<$ in $\Omega_{s,t}(W)$. Define a permutation $\mu=\mu_W(I)=\mu_1\mu_2\cdots\mu_{st}$ as follows. For each $\ell\in[k]$, let $\mu_{i_\ell}=w_\ell$. Place the elements of $[st]\setminus W$ in increasing order in the remaining unfilled positions of $\mu$. For example, if $s=4$, $t=2$, $W=\{6,7\}$, and $I=\{4,6\}$, then $\mu_W(I)=12364758$. In this example, $\mu_W(I)$ is an element of $\Lambda_{s,t}(W)$. The following lemma tells us that this is not a coincidence. 

\begin{lemma}\label{Lem2}
Fix integers $s,t\geq 2$ and a subset $W=\{w_1,w_2,\ldots,w_k\}_<$ of $\{s+1,s+2,\ldots,st\}$. Preserving the notation from the previous two paragraphs, let $I=\{i_1,i_2,\ldots,i_k\}_<$ be a set of integers in $\Omega_{s,t}(W)$. The permutation $\mu_W(I)$ is an element of $\Lambda_{s,t}(W)$.  
\end{lemma}
\begin{proof}
The permutation $\mu_W(I)$ is a union of two increasing subsequences (the elements of $W$ form one, and the elements of $[st]\setminus W$ form the other), so $\mu_W(I)$ avoids $321$. Choose some $\ell\in[k]$. By the definition of $\mu_W(I)$, the $i_\ell^\text{th}$ entry of $\mu_W(I)$ is $w_\ell$. Because $i_\ell\leq w_\ell-1$, there is some entry $a<w_\ell$ that appears to the right of $w_\ell$ in $\mu_W(I)$. This means that $w_\ell$ is not a right-to-left minimum. Since $\ell$ was arbitrary, none of the elements of $W$ are right-to-left minima of $\mu_W(I)$. 

Suppose there is some $z\in[st]\setminus W$ that is not a right-to-left minimum of $\mu_W(I)$. This means that there is some entry $z'<z$ that appears to the right of $z$ in $\mu_W(I)$. Since $\mu_W(I)$ avoids $321$, $z'$ must be a right-to-left minimum of $\mu_W(I)$. It follows that $z'\in[st]\setminus W$. However, since $z\in [st]\setminus W$, this contradicts the fact that the elements of $[st]\setminus W$ form an increasing subsequence of $\mu_W(I)$. From this contradiction, we deduce that the right-to-left minima of $\mu_W(I)$ are precisely the elements of $[st]\setminus W$. 

To see why $\mu_W(I)$ is a linear extension of  $K_{s,t}^\alpha$, note first that the entries $1,2,\ldots,s$ appear in increasing order. Suppose, by way of contradiction, that there is some $y\in[st-s]$ such that $y+s$ appears to the left of $y$ in $\mu_W(I)$. Then $y+s$ is not a right-to-left minimum of $\mu_W(I)$. In addition, $y$ is a right-to-left minimum of $\mu_W(I)$ because $\mu_W(I)$ avoids $321$. It follows that $y+s\in W$ and $y\in[st]\setminus W$. Say $y+s=w_\ell$. By the definition of $\mu_W(I)$, $w_\ell$ is the $i_\ell^{\text{th}}$ entry of $\mu_W(I)$. By hypothesis, $i_\ell\geq w_\ell-s+\ell=y+\ell$.  Since the elements of $W$ form an increasing subsequence of $\mu_W(I)$, there are exactly $\ell-1$ elements of $W$ to the left of $w_\ell$. Consequently, there are at least $y$ elements of $[st]\setminus W$ to the left of $w_\ell$. The elements of $[st]\setminus W$ also appear in increasing order in $\mu_W(I)$, so the smallest $y$ elements of $[st]\setminus W$ appear to the left of $w_\ell$. However, this is a contradiction since $y$ is necessarily one of the $y$ smallest elements of $[st]\setminus W$. Hence, $y$ appears before $y+s$ in $\mu_W(I)$ for each $y\in[st-s]$. This proves that $\mu_W(I)$ is a linear extension of $K_{s,t}^\alpha$. 
\end{proof}

Lemma \ref{Lem2} tells us that we have a well-defined map $\mu_W\colon\Omega_{s,t}(W)\to\Lambda_{s,t}(W)$. This map, which we defined in the paragraph immediately preceding Lemma \ref{Lem2}, is actually an injection. Indeed, if we are given $\mu_W(I)$, we can easily recover the set $I$ from the observation that $i\in I$ if and only if the $i^\text{th}$ entry of $\mu_W(I)$ is in $W$. This proves the first statement in the following theorem.   

\begin{theorem}\label{Thm3}
Preserve the notation from the preceding paragraph. The map $\mu_W\colon\Omega_{s,t}(W)\to\Lambda_{s,t}(W)$ is a well-defined injection. If $t=2$, this map is bijective. 
\end{theorem}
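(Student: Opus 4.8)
The plan is to establish surjectivity of $\mu_W$ when $t=2$, since well-definedness and injectivity are already in hand. The starting point is the structural fact recalled just before Lemma~\ref{Lem2}: a $321$-avoiding permutation is completely determined by specifying which entries fail to be right-to-left minima together with the positions they occupy, because both the right-to-left minima and the remaining entries occur in increasing order. For $\pi\in\Lambda_{s,2}(W)$ the non-right-to-left minima are precisely the elements of $W$, so if I let $i_\ell$ denote the position of $w_\ell$ in $\pi$, then $i_1<i_2<\cdots<i_k$ and $\pi=\mu_W(I)$ for $I=\{i_1,\dots,i_k\}_<$. The entire task thereby reduces to checking that this $I$ lies in $\Omega_{s,2}(W)$, that is, $w_\ell-s+\ell\le i_\ell\le w_\ell-1$ for every $\ell$.

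To analyze $i_\ell$, I would write $j_\ell$ for the number of elements of $[st]\setminus W$ lying to the left of $w_\ell$ in $\pi$. Since exactly $w_1,\dots,w_{\ell-1}$ precede $w_\ell$ among the elements of $W$ (the elements of $W$ appear in increasing order), one has $i_\ell=\ell+j_\ell$, so both inequalities become statements about $j_\ell$.

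For the upper bound I would show $j_\ell\le w_\ell-\ell-1$. Every element of $[st]\setminus W$ to the left of $w_\ell$ is a right-to-left minimum and hence smaller than $w_\ell$, and there are only $w_\ell-\ell$ elements of $[st]\setminus W$ smaller than $w_\ell$ (the interval $\{1,\dots,w_\ell-1\}$ contains the $\ell-1$ elements $w_1,\dots,w_{\ell-1}$ of $W$). Because $w_\ell$ is not a right-to-left minimum, some entry smaller than $w_\ell$ appears to its right; such an entry cannot lie in $W$, since the $W$-elements below $w_\ell$ all precede it. Thus at least one of those $w_\ell-\ell$ small elements of $[st]\setminus W$ sits to the right of $w_\ell$, forcing $j_\ell\le w_\ell-\ell-1$ and hence $i_\ell\le w_\ell-1$. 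This half of the argument uses nothing about $t$.

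The lower bound is where the hypothesis $t=2$ enters, and I expect it to be the crux. Writing $w_\ell=s+a$ with $a\in[s]$, which is legitimate precisely because $t=2$ forces $w_\ell\le 2s$, I would observe that the spine elements $1,2,\dots,a$ all belong to $[st]\setminus W=[2s]\setminus W$ and all precede $w_\ell$ in $\pi$: the covering relations of $K_{s,2}^\alpha$ give $a\lessdot s+a=w_\ell$ together with $1<2<\cdots<a$ along the spine, so each of $1,\dots,a$ lies below $w_\ell$ in the poset and must precede it in any linear extension. This yields $j_\ell\ge a=w_\ell-s$, whence $i_\ell\ge w_\ell-s+\ell$. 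The reason the argument collapses for $t\ge 3$ is instructive and consistent with the theorem claiming bijectivity only when $t=2$: a general $w_\ell$ may sit in a higher row, so that $w_\ell-s$ exceeds the number of elements guaranteed to precede it and the lower bound can genuinely fail. Combining the two bounds gives $I\in\Omega_{s,2}(W)$ with $\mu_W(I)=\pi$, so $\mu_W$ is onto and therefore bijective.
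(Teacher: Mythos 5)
Your proposal is correct and follows essentially the same route as the paper's proof: reduce surjectivity to verifying $I\in\Omega_{s,2}(W)$, obtain the lower bound $i_\ell\geq w_\ell-s+\ell$ from the spine relations forcing $1,2,\ldots,w_\ell-s$ (together with $w_1,\ldots,w_{\ell-1}$) to precede $w_\ell$, and obtain the upper bound $i_\ell\leq w_\ell-1$ from the fact that $w_\ell$ is not a right-to-left minimum combined with $321$-avoidance. Your bookkeeping via $j_\ell$ is a cosmetic variant of the paper's direct count, and your observation that only the lower bound uses $t=2$ matches the structure of the paper's argument.
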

\begin{proof}
We are left to show that $\mu_W\colon\Omega_{s,t}(W)\to\Lambda_{s,t}(W)$ is surjective if $t=2$. We may assume $W$ is nonempty. Indeed, if $W=\emptyset$, then $\mu_W$ is a bijection because $\Omega_{s,t}(W)=\{\emptyset\}$ and $\Lambda_{s,t}(W)=\{12\cdots(st)\}$. Suppose $t=2$, and choose some $\pi=\pi_1\pi_2\cdots\pi_{2s}\in\Lambda_{s,2}(W)$. For each $\ell\in[k]$, let $i_\ell$ be the integer satisfying $\pi_{i_\ell}=w_\ell$. Put $I=\{i_1,i_2,\ldots,i_k\}$. We will prove that $\pi=\mu_W(I)$. To do so, we simply need to show that $I\in\Omega_{s,2}(W)$. Indeed, we know the sets $W$ and $[2s]\setminus W$ form increasing subsequences of $\pi$ because $[2s]\setminus W$ consists of the right-to-left minima of $\pi$ and $\pi$ avoids $321$. Choose some $\ell\in[k]$. We need to show that $w_\ell-s+\ell\leq i_\ell\leq w_\ell-1$. 

Note that $w_\ell\in\{s+1,s+2,\ldots,2s\}$ by the definition of $W$. Because $\pi$ is a linear extension of $K_{s,2}^\alpha$, $w_\ell-s$ precedes $w_\ell$ in $\pi$. The entries $1,2,\ldots,w_\ell-s$ all precede $w_\ell$ in $\pi$ because they appear in increasing order. The entries $w_1,w_2,\ldots,w_{\ell-1}$ also precede $w_\ell$ in $\pi$ since the elements of $W$ appear in increasing order. Observe that the sets $\{w_1,w_2,\ldots,w_{\ell-1}\}$ and $\{1,2,\ldots,w_\ell-s\}$ are disjoint because all of the elements of the latter set are less than or equal to $s$. This proves that there are at least $w_\ell-s+\ell-1$ entries to the left of $w_\ell$ in $\pi$, which means that $i_\ell\geq w_\ell-s+\ell$. 
 
Now, because $w_\ell$ is not a right-to-left minimum of $\pi$, there is an entry $x<w_\ell$ that appears to the right of $w_\ell$. Since $\pi$ avoids $321$, all of the entries to the left of $w_\ell$ are elements of the set $\{1,2,\ldots,w_\ell-1\}\setminus\{x\}$. This shows that there are at most $w_\ell-2$ entries to the left of $w_\ell$, so $i_\ell\leq w_\ell-1$.   
\end{proof}
 
Using the second statement in Theorem \ref{Thm3}, we have computed $A_{321}(K_{s,2}^\alpha)$ for $1\leq s\leq 13$. These values, starting with $s=1$, are \[1, 3, 13, 67, 378, 2244, 13737, 85767, 542685, 3466515, 22298796, 144210388, 936575968.\] This extends the list of known values of this sequence since Yakoubov only computed $A_{321}(K_{s,2}^\alpha)$ for $1\leq s\leq 6$. 

For fixed $t\geq 2$, it is natural to ask about the growth rate $\displaystyle{\lim_{s\to\infty}\sqrt[s]{A_{321}(K_{s,t}^\alpha)}}$ of the sequence $(A_{321}(K_{s,t}^\alpha))_{s=1}^\infty$. First of all, how do we know this limit even exists? We say a sequence of real numbers $(a_m)_{m=1}^\infty$ is supermultiplicative if $a_ma_n\leq a_{m+n}$ for all positive integers $m,n$. The multiplicative version of Fekete's lemma  \cite{Fekete} states that if $(a_m)_{m=1}^\infty$ is a supermultiplicative sequence, then $\displaystyle{\lim_{m\to\infty}\sqrt[m]{a_m}}$ exists and equals $\displaystyle{\sup_{m\geq 1}\sqrt[m]{a_m}}$. We claim that $(A_{321}(K_{s,t}^\alpha))_{s=1}^\infty$ is a supermultiplicative sequence. 

If $\pi=\pi_1\pi_2\cdots\pi_m\in S_m$ and $\tau=\tau_1\tau_2\cdots\tau_n\in S_n$ are permutations, then $\pi\oplus\tau$ is the permutation $p_1p_2\cdots p_{m+n}\in S_{m+n}$ defined by \[p_i=\begin{cases} \pi_i, & \mbox{if } 1\leq i\leq m; \\ m+\tau_{i-m}, & \mbox{if } m+1\leq i\leq m+n. \end{cases}\] It is easy to see that if $\pi$ and $\tau$ are $321$-avoiding linear extensions of $K_{s,t}^\alpha$ and $K_{s',t}^\alpha$, respectively, then $\pi\oplus\tau$ is a $321$-avoiding linear extension of $K_{s+s',t}^\alpha$. Therefore, $A_{321}(K_{s,t}^\alpha)A_{321}(K_{s',t}^\alpha)\leq A_{321}(K_{s+s',t}^\alpha)$. By Fekete's lemma, the limit $\displaystyle{\lim_{s\to\infty}\sqrt[s]{A_{321}(K_{s,t}^\alpha)}}$ exists. 

Theorem \ref{Thm4} provides upper and lower bounds for the growth rate $\displaystyle{\lim_{s\to\infty}\sqrt[s]{A_{321}(K_{s,t}^\alpha)}}$ for each fixed $t\geq 2$. The proof of the lower bound relies heavily upon Theorem \ref{Thm3}, which is why we cared so deeply about proving that result. The idea behind the proof is to find a lower bound for the size of the set $\Omega_{s,2}(W)$ and then use the bijection from Theorem \ref{Thm3} to deduce a lower bound for $\Lambda_{s,2}(W)$. We derive a lower bound for $\vert\Omega_{s,t}(W)\vert$ in the following lemma. 

\begin{lemma}\label{Lem3}
Let $W=\{w_1,w_2,\ldots,w_k\}_<$ be a subset of  $\{s+1,s+2,\ldots,st\}$, and let $\Omega_{s,t}(W)$ be as in Theorem \ref{Thm3}. We have \[\vert \Omega_{s,t}(W)\vert\geq \frac{s-k}{s}{s+k-1\choose k}.\]
\end{lemma}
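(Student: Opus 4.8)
The plan is to reduce the general case to the ``most constrained'' choice of $W$ and then evaluate that case with a classical ballot-type count. Write $W_0=\{s+1,s+2,\ldots,s+k\}$ for the set obtained by pushing the elements of $W$ as far to the left as possible; since $W\subseteq\{s+1,\ldots,st\}$ has $k$ elements we have $w_\ell\geq s+\ell$, so $W_0$ is a legitimate subset of $\{s+1,\ldots,st\}$ and can serve as a benchmark. The first step is to prove
\[|\Omega_{s,t}(W)|\geq|\Omega_{s,t}(W_0)|.\]
For this I would set $\delta_\ell=w_\ell-(s+\ell)$ and record two facts: each $\delta_\ell\geq 0$ (because $w_\ell\geq s+\ell$), and the sequence $\delta_1,\delta_2,\ldots,\delta_k$ is nondecreasing (because $\delta_{\ell+1}-\delta_\ell=w_{\ell+1}-w_\ell-1\geq 0$). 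An element $I^0=\{i_1^0<\cdots<i_k^0\}$ of $\Omega_{s,t}(W_0)$ is exactly a chain with $2\ell\leq i_\ell^0\leq s+\ell-1$. I would map $I^0$ to the set with elements $i_\ell^0+\delta_\ell$. Because the $\ell$-th defining interval $[w_\ell-s+\ell,\,w_\ell-1]$ for $W$ is precisely the $\delta_\ell$-shift of the interval $[2\ell,\,s+\ell-1]$ for $W_0$, each shifted element lands in its allowed interval; and because $i_{\ell+1}^0\geq i_\ell^0+1$ while $\delta_{\ell+1}\geq\delta_\ell$, the shifted elements stay strictly increasing. This gives a well-defined injection $\Omega_{s,t}(W_0)\hookrightarrow\Omega_{s,t}(W)$ and hence the inequality. (Equivalently, one can collapse both steps into a single injection sending a ballot sequence $j$ directly to the set with elements $j_\ell+w_\ell-s$.)

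The second step is to evaluate $|\Omega_{s,t}(W_0)|$ exactly. Here I would pass from the strictly increasing chains $i_1<\cdots<i_k$ with $2\ell\leq i_\ell\leq s+\ell-1$ to the weakly increasing sequences defined by $j_\ell=i_\ell-\ell$, which satisfy $\ell\leq j_\ell\leq s-1$. Counting weakly increasing $j_1\leq\cdots\leq j_k$ with $j_\ell\geq\ell$ and $j_\ell\leq s-1$ is a standard ballot problem: such sequences correspond bijectively to monotone lattice paths from $(0,0)$ to $(s-1,k)$ that never rise above the line $y=x$ (letting the $\ell$-th north step occur at abscissa $j_\ell$, the constraint $j_\ell\geq\ell$ is exactly the condition that the path stay weakly below the diagonal). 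The cycle lemma then counts these as
\[\frac{(s-1)-k+1}{(s-1)+1}\binom{(s-1)+k}{k}=\frac{s-k}{s}\binom{s+k-1}{k}.\]
Combining the two steps yields the claimed lower bound.

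I expect the first step to be the main obstacle: a priori $|\Omega_{s,t}(W)|$ depends on $W$ in a nontransparent way, and the crux is recognizing that the consecutive configuration $W_0$ minimizes the count and then packaging this as the monotone-shift injection, whose validity hinges on the observation that the shifts $\delta_\ell$ are nondecreasing. Once the reduction to $W_0$ is in place, the remaining ballot-number computation is routine; the only extra bookkeeping is to note that when $k\geq s$ the stated bound is nonpositive (it is $0$ at $k=s$ and negative for $k>s$, which can occur once $t\geq 3$), so the inequality is trivially true in that range and we may freely assume $k\leq s-1$.
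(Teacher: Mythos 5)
Your proof is correct and is essentially the paper's argument: the composite of your two injections sends a ballot path with north-step abscissas $j_\ell$ to the set with elements $j_\ell+w_\ell-s$, which is exactly the single map $\mathcal I(\mathcal J(L))$ (with $i_\ell = j_\ell + w_\ell - s - \ell$ in the paper's step-position coordinates) that the paper uses, followed by the same ballot-theorem count $\frac{s-k}{s}\binom{s+k-1}{k}$. Your factorization through the benchmark set $W_0$ and your explicit remark about the trivial case $k\geq s$ are just minor repackagings/refinements of the same idea.
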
    
\begin{proof}
Let us begin by choosing a lattice path $L$ from $(0,0)$ to $(s-1,k)$ that uses steps $(0,1)$ and $(1,0)$ and never passes above the line $y=x$. Let $\mathcal J(L)$ be the set of all $j\in[s+k-1]$ such that the $j^\text{th}$ step in $L$ is a $(0,1)$ step. Let us write $\mathcal J(L)=\{j_1,j_2,\ldots,j_k\}_<$. Observe that $2\ell\leq j_\ell\leq s+\ell-1$. Let $\mathcal I(\mathcal J(L))=\{i_1,i_2,\ldots,i_k\}$, where $i_\ell=j_\ell+w_\ell-s-\ell$. For all $\ell\in[k-1]$, we have \[i_{\ell+1}-i_\ell=j_{\ell+1}-j_\ell+w_{\ell+1}-w_\ell-1\geq j_{\ell+1}-j_\ell\geq 1,\] so $i_1<i_2<\cdots<i_k$. From the inequalities $2\ell\leq j_\ell\leq s+\ell-1$, we obtain $w_\ell-s+\ell\leq i_\ell\leq w_\ell-1$. This shows that $\mathcal I(\mathcal J(L))\in\Omega_{s,t}(W)$. The maps $L\mapsto\mathcal J(L)$ and $\mathcal J(L)\mapsto\mathcal I(\mathcal J(L))$ are both injective, so $\vert \Omega_{s,t}(W)\vert$ is at least the number of possible choices for $L$. A variant of Bertrand's ballot theorem \cite{Bertrand} states that the number of choices for $L$ is \[\frac{s-k}{s}{s+k-1\choose k}. \qedhere\]
\end{proof}

\begin{theorem}\label{Thm4}
For $t\geq 2$, \[4^{t-2}(3+2\sqrt 2)\leq\lim_{s\to\infty}\sqrt[s]{A_{321}(K_{s,t}^\alpha)}\leq\frac{(2t-1)^{2t-1}}{(t-1)^{t-1}t^t}.\] 
\end{theorem}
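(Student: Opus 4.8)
The plan is to prove the two bounds separately, in each case reducing $A_{321}(K_{s,t}^\alpha)$ to a sum of products of binomial coefficients and then extracting the exponential growth rate via Stirling's formula, using the existence of the limit already guaranteed by Fekete's lemma.

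For the upper bound I would first record the structural fact that in any $321$-avoiding linear extension $\sigma$ of $K_{s,t}^\alpha$ the entries $1,2,\ldots,s$ are necessarily right-to-left minima: they are the $s$ smallest labels and appear in increasing order, so nothing smaller ever lies to their right. As observed before Lemma~\ref{Lem2}, a $321$-avoiding permutation is determined by the set $W$ of its non-right-to-left-minima together with the set $I$ of positions those entries occupy; hence $\sigma\mapsto(W,I)$ is injective, and the previous remark forces $W\subseteq\{s+1,\ldots,st\}$. Since $\{s+1,\ldots,st\}$ has $(t-1)s$ elements and $I$ is a $\vert W\vert$-element subset of $[st]$, this yields the clean bound \[A_{321}(K_{s,t}^\alpha)\le\sum_{k=0}^{(t-1)s}\binom{(t-1)s}{k}\binom{st}{k}.\] I would then bound the sum by $(st+1)$ times its largest term and apply Stirling's formula. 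The maximizing index is $k^\ast=\frac{t(t-1)}{2t-1}s$, and a direct computation shows that the corresponding term grows at the rate $\frac{(2t-1)^{2t-1}}{(t-1)^{t-1}t^t}$, while the polynomial factor $st+1$ disappears upon taking $s$-th roots.

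For the lower bound the idea is to bootstrap from the case $t=2$. First I would prove the supermultiplicative-type inequality \[A_{321}(K_{s,t}^\alpha)\ge A_{321}(K_{s,2}^\alpha)\cdot C_s^{\,t-2}\] by a concatenation construction: given a $321$-avoiding linear extension $\rho$ of $K_{s,2}^\alpha$ (on the labels $1,\ldots,2s$) and, for each level $j\in\{3,\ldots,t\}$, an arbitrary $321$-avoiding permutation $\beta_j$ of the label block $\{(j-1)s+1,\ldots,js\}$, form $\sigma=\rho\,\beta_3\beta_4\cdots\beta_t$. These blocks occupy consecutive positions and carry increasing value ranges, so any putative $321$ pattern would have its largest entry in the leftmost block and its smallest in the rightmost block, which is impossible unless all three entries lie in a single block; hence $\sigma$ avoids $321$. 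The block order also respects every tooth relation, and the spine order lives inside $\rho$, so $\sigma$ is a genuine linear extension. As there are $A_{321}(K_{s,2}^\alpha)$ choices for $\rho$ and $C_s$ choices for each $\beta_j$, the inequality follows. It then remains to show $\lim_{s\to\infty}\sqrt[s]{A_{321}(K_{s,2}^\alpha)}\ge 3+2\sqrt2$: using the bijection of Theorem~\ref{Thm3} in the case $t=2$, I have $A_{321}(K_{s,2}^\alpha)=\sum_W\vert\Omega_{s,2}(W)\vert$, and retaining only the subsets $W\subseteq\{s+1,\ldots,2s\}$ of a single well-chosen size $k$ gives $A_{321}(K_{s,2}^\alpha)\ge\binom{s}{k}\frac{s-k}{s}\binom{s+k-1}{k}$ by Lemma~\ref{Lem3}. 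Taking $k=\lfloor s/\sqrt2\rfloor$ and applying Stirling's formula yields the rate $3+2\sqrt2$ (the exponent is maximized at density $1/\sqrt2$). Combining the two displayed facts with $\lim_{s\to\infty}\sqrt[s]{C_s}=4$ gives $\lim_{s\to\infty}\sqrt[s]{A_{321}(K_{s,t}^\alpha)}\ge 4^{t-2}(3+2\sqrt2)$.

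The routine but delicate part is the pair of Stirling estimates: one must verify that the relevant exponents are maximized at the claimed densities ($1/\sqrt2$ for the lower bound and $k^\ast/s=t(t-1)/(2t-1)$ for the upper bound) and that the resulting maxima simplify to the exact constants $3+2\sqrt2$ and $\frac{(2t-1)^{2t-1}}{(t-1)^{t-1}t^t}$. I expect the main conceptual obstacle to be the lower bound rather than the upper bound: the injection $\mu_W$ of Theorem~\ref{Thm3} is far from surjective once $t\ge 3$, so Lemma~\ref{Lem3} alone cannot reach the factor $4^{t-2}$. The concatenation argument is precisely what supplies the extra $C_s^{\,t-2}\approx 4^{(t-2)s}$ linear extensions that the $\Omega$-based count misses, and getting a clean injective construction whose image avoids $321$ is the crux of that half.
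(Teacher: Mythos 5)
Your proposal is correct and follows essentially the same route as the paper's proof: the identical upper bound $\sum_{k}\binom{(t-1)s}{k}\binom{st}{k}$ maximized at $k\approx\frac{t(t-1)}{2t-1}s$, and the identical lower bound obtained by combining Theorem~\ref{Thm3} and Lemma~\ref{Lem3} at $k=\lfloor s/\sqrt 2\rfloor$ for $t=2$ with the block-concatenation inequality $A_{321}(K_{s,t}^\alpha)\geq A_{321}(K_{s,2}^\alpha)C_s^{t-2}$. The only cosmetic difference is that you invoke the bijectivity of $\mu_W$ when $t=2$ to write an exact sum, whereas the paper only needs the injectivity direction to get the inequality $\vert\Lambda_{s,2}(W)\vert\geq\frac{s-k}{s}\binom{s+k-1}{k}$.
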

\begin{proof}
We first prove the lower bound when $t=2$. Recall that $\Lambda_{s,2}(W)$ is the set of $321$-avoiding linear extensions of $K_{s,2}^\alpha$ whose right-to-left minima are the elements of $[2s]\setminus W$. This means that 
\begin{equation}\label{Eq3} 
A_{321}(K_{s,2}^\alpha)=\sum_{W\subseteq\{s+1,\ldots,2s\}}\vert\Lambda_{s,2}(W)\vert.
\end{equation} From Theorem \ref{Thm3} and Lemma \ref{Lem3}, we deduce that if $W$ is a $k$-element subset of \\$\{s+1,s+2,\ldots,2s\}$, then $\displaystyle{\vert\Lambda_{s,2}(W)\vert\geq\frac{s-k}{s}{s+k-1\choose k}}$. Partitioning the subsets of \\$\{s+1,s+2,\ldots,2s\}$ in the summation in \eqref{Eq3} according to their sizes, we find that \[A_{321}(K_{s,2}^\alpha)=\sum_{k=0}^{s}\sum_{\substack{W\subseteq\{s+1,\ldots,2s\}\\ \vert W\vert=k}}\vert\Lambda_{s,2}(W)\vert\geq\sum_{k=0}^{s}\frac{s-k}{s}{s\choose k}{s+k-1\choose k}\] \[\geq \frac{s-\left\lfloor s/\sqrt 2\right\rfloor}{s}{s\choose \left\lfloor s/\sqrt 2\right\rfloor}{s+\left\lfloor s/\sqrt 2\right\rfloor-1\choose \left\lfloor s/\sqrt 2\right\rfloor}.\] We chose to extract the term in which $k=\left\lfloor s/\sqrt 2\right\rfloor$ in order to maximize our obtained lower bound. Using Stirling's approximation, it is straightforward to show that \[\lim_{s\to\infty}\sqrt[s]{\frac{s-\left\lfloor s/\sqrt 2\right\rfloor}{s}{s\choose \left\lfloor s/\sqrt 2\right\rfloor}{s+\left\lfloor s/\sqrt 2\right\rfloor-1\choose \left\lfloor s/\sqrt 2\right\rfloor}}=\frac{(1+1/\sqrt 2)^{1+1/\sqrt 2}}{\left((1/\sqrt 2)^{1/\sqrt 2}\right)^2(1-1/\sqrt 2)^{1-1/\sqrt 2}}=3+2\sqrt 2.\] 

Next, suppose $t\geq 3$. Let $\tau$ be a $321$-avoiding linear extension of $K_{s,2}^\alpha$. For each $r\in\{2,3,\ldots,$ $t-1\}$, let $\sigma_r$ be a $321$-avoiding permutation of the elements of $\{rs+1,rs+2,\ldots,(r+1)s\}$. The permutation $\tau\sigma_2\sigma_3\cdots\sigma_{t-1}$ is a $321$-avoiding linear extension of $K_{s,t}^\alpha$. There are $A_{321}(K_{s,2}^\alpha)C_s^{t-2}$ ways to choose the permutations $\tau,\sigma_2,\sigma_3,\ldots,\sigma_{t-1}$, so \[\lim_{s\to\infty}\sqrt[s]{A_{321}(K_{s,t}^\alpha)}\geq\lim_{s\to\infty}\sqrt[s]{A_{321}(K_{s,2}^\alpha)C_s^{t-2}}\geq 4^{t-2}(3+2\sqrt 2).\]

Let us now proceed to prove the upper bound stated in the theorem. Suppose we wish to construct a $321$-avoiding linear extension of $K_{s,t}^\alpha$ that has $st-k$ right-to-left minima. We must have $k\leq st-s$ since the entries $1,2,\ldots,s$ will be right-to-left minima. By the same token, all entries that are not right-to-left minima must belong to the set $\{s+1,s+2,\ldots,st\}$. There are ${st-s\choose k}$ ways to choose the $k$ entries that are not right-to-left minima. There are then at most ${st\choose k}$ ways to choose the positions in the permutation that these $k$ entries will occupy. After we make these choices, the permutation is determined by the assumption that it avoids the pattern $321$. Indeed, the right-to-left minima must appear in increasing order, and the other entries must also appear in increasing order. 

This shows that 
\begin{equation}\label{Eq4}
A_{321}(K_{s,t}^\alpha)\leq\sum_{k=0}^{st-s}{(t-1)s\choose k}{st\choose k}.
\end{equation}
Let $u_{s,t}(k)={(t-1)s\choose k}{st\choose k}$. Let $r_s(t)$ be one of the integers $k\in\{0,1,\ldots,st-s\}$ that maximizes $u_{s,t}(k)$, and put $\gamma_s(t)=r_s(t)/s$. We have \[\frac{u_{s,t}(k)}{u_{s,t}(k+1)}=\frac{(k+1)^2}{((t-1)s-k)(st-k)},\] so $u_{s,t}(k)\leq u_{s,t}(k+1)$ if and only if $(k+1)^2\leq ((t-1)s-k)(st-k)$. This occurs if and only if $k\leq\dfrac{t(t-1)s^2-1}{(2t-1)s+2}$. Consequently, 
\begin{equation}\label{Eq5}
\gamma_s(t)=\frac 1s\left(\dfrac{t(t-1)s^2-1}{(2t-1)s+2}+O(1)\right)=\dfrac{t(t-1)}{2t-1}+O(1/s)
\end{equation} for each fixed $t\geq 2$. Using \eqref{Eq4}, we find that \[\lim_{s\to\infty}\sqrt[s]{A_{321}(K_{s,t}^\alpha)}\leq\lim_{s\to\infty}\sqrt[s]{(st-s+1)u_{s,t}(\gamma_s(t)s)}.\] To ease notation, let us write $f(x)=x^x$ and $\gamma(t)=\dfrac{t(t-1)}{2t-1}$. Using Stirling's approximation and \eqref{Eq5}, it is straightforward to show that 
\begin{equation}\label{Eq6}
\lim_{s\to\infty}\sqrt[s]{(st-s+1)u_{s,t}(\gamma_s(t)s)}=\frac{f(t-1)f(t)}{f(\gamma(t))^2f(t-1-\gamma(t))f(t-\gamma(t))}.
\end{equation} Elementary algebraic manipulations show that the right-hand side of \eqref{Eq6} is equal to $\dfrac{(2t-1)^{2t-1}}{t^t(t-1)^{t-1}}$. 
\end{proof}

\begin{center}
\underline{\bf 3.4 $A_{321}(K_{s,t}^\beta)$}
\end{center}
Until now, we have limited our focus to linear extensions of $\alpha$-labeled combs. At this point, we turn our attention to $\beta$-labeled combs. Recall from the end of the introduction that the $\beta$-labeled comb $K_{s,t}^\beta$ is obtained from $K_{s,t}$ by assigning each element $e_{i,j}$ the label $(i-1)t+j$. 

In her paper, Yakoubov finds explicit formulas for $A_{\tau}(K_{s,2}^\beta)$ for every length-$3$ pattern $\tau$. She also finds formulas for $A_{\tau}(K_{s,t}^\beta)$ for all $t\geq 3$ whenever $\tau$ is a length-$3$ pattern other than $321$. It is our goal to give a general formula for $A_{321}(K_{s,t}^\beta)$. We will also show that for each fixed $s\geq 2$, $\displaystyle{\lim_{t\to\infty}\sqrt[t]{A_{321}(K_{s,t}^\beta)}=2^s}$. 

\begin{theorem}\label{Thm5}
For $t\geq 2$, let \[F_{2,t}(k)=\begin{cases} 1, & \mbox{if } 2\leq k\leq t+1; \\ 0, & \mbox{otherwise.} \end{cases}\]
For $s\geq 3$ and $t\geq 2$, define $F_{s,t}(k)$ recursively by \[F_{s,t}(k)=\begin{cases} \displaystyle{\sum_{i=s-1}^{k-1}F_{s-1,t}(i)\sum_{j=\max\{0,k-(s-2)t-2\}}^{t-1}{k-i-1\choose j}}, & \mbox{if } s\leq k\leq(s-1)t+1; \\ 0, & \mbox{otherwise.} \end{cases}\] We have \[A_{321}(K_{s,t}^\beta)=\sum_{k=s}^{(s-1)t+1}{st-k\choose t-1}F_{s,t}(k).\] 
\end{theorem}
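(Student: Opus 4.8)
The plan is to peel the teeth of $K_{s,t}^\beta$ off one at a time, using throughout the fact---already exploited in Section 3.3---that a $321$-avoiding permutation is the union of its increasing sequence of right-to-left minima and its increasing sequence of non-right-to-left minima. The one extra observation I will lean on is that in \emph{any} $321$-avoiding permutation every entry lying to the right of the largest entry must appear in increasing order, since a descent to the right of the maximum would complete a $321$ pattern. Let $\widetilde K_{s,t}$ denote the subposet of $K_{s,t}^\beta$ obtained by deleting the top $t-1$ elements $(s-1)t+2,\ldots,st$ of the last tooth, so that $\widetilde K_{s,t}$ has $(s-1)t+1$ elements and its unique maximum is $(s-1)t+1$. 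I would define $F_{s,t}(k)$ to be the number of $321$-avoiding linear extensions of $\widetilde K_{s,t}$ in which the maximum $(s-1)t+1$ occupies position $k$, and then show that this combinatorial quantity obeys the stated recursion and has the stated support.

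To obtain the displayed formula, I would establish a bijection between the $321$-avoiding linear extensions of $K_{s,t}^\beta$ and the pairs consisting of a $321$-avoiding linear extension $\rho$ of $\widetilde K_{s,t}$ (with its maximum at some position $k$) together with a choice of $t-1$ of the $st-k$ positions lying to the right of position $k$. Given such a $\pi$, deleting the entries $(s-1)t+2,\ldots,st$ (which, since the last tooth is increasing, all lie to the right of $(s-1)t+1$) yields $\rho$; conversely, reinserting these $t-1$ largest values in increasing order into any $t-1$ positions to the right of the maximum of $\rho$ recovers a linear extension of $K_{s,t}^\beta$. The crucial point is that this reinsertion cannot create a $321$ pattern: the inserted values are the $t-1$ largest of all, so each can play only the role of the top entry of a forbidden pattern, and by the observation above the portion of $\rho$ to the right of its maximum is increasing, leaving no descent for such a pattern to use. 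Since the maximum stays at position $k$ and there are $st-k$ positions to its right, this produces the factor $\binom{st-k}{t-1}$ and hence $A_{321}(K_{s,t}^\beta)=\sum_k\binom{st-k}{t-1}F_{s,t}(k)$; the range $s\le k\le(s-1)t+1$ comes from the fact that the $s-1$ spine elements below $(s-1)t+1$ must precede it, while it may also be placed last.

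The base case $s=2$ is immediate: $\widetilde K_{2,t}$ is the chain $1<2<\cdots<t$ together with the maximum $t+1$ covering $1$, and placing $t+1$ at position $k$ forces the remaining entries (which, being split by the maximum, must appear in increasing order to avoid $321$) into a unique arrangement, giving $F_{2,t}(k)=1$ exactly for $2\le k\le t+1$. For the recursion with $s\ge3$, I would start from a $321$-avoiding linear extension of $\widetilde K_{s,t}$ with maximum at position $k$ and delete both the maximum $m=(s-1)t+1$ and the top $t-1$ entries $(s-2)t+2,\ldots,(s-1)t$ of tooth $s-1$; what remains is a $321$-avoiding linear extension of $\widetilde K_{s-1,t}$ with its own maximum $(s-2)t+1$ at some position $i$, accounting for the factor $F_{s-1,t}(i)$. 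Reconstructing amounts to interleaving the deleted tooth-$(s-1)$ block (an increasing block $T$ of size $t-1$) and $m$ into the entries of the smaller extension lying to the right of position $i$ (an increasing block $S$, all smaller than $T$ and $m$). Here I would invoke the observation once more: to the right of $m$ the string must be increasing, which forces every remaining entry of $S$ to precede every remaining entry of $T$ there, so the arrangement to the right of $m$ is determined, while to the left of $m$ any shuffle of the used portions of $S$ and $T$ is allowed. Counting the shuffles of the $j$ entries of $T$ and the $(k-i-1)-j$ entries of $S$ lying left of $m$ gives $\binom{k-i-1}{j}$, and the inequalities $0\le j\le t-1$ together with $0\le(k-i-1)-j\le|S|=(s-2)t+1-i$ yield exactly the summation range $\max\{0,k-(s-2)t-2\}\le j\le t-1$.

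The main obstacle is the recursion step: one must verify that $321$-avoidance of the reconstructed extension is \emph{equivalent} to the single local condition that, to the right of $m$, the entries of $S$ all precede those of $T$, and that no other interaction---for instance between the fixed prefix of the smaller extension and the newly inserted large entries, or among $T$ and $m$ themselves---can create a forbidden pattern. Checking this requires a careful case analysis of which of $S$, $T$, and $\{m\}$ can supply the top, middle, and bottom entries of a hypothetical $321$ pattern, using that $S<T<m$ and that each block is internally increasing; once this is in hand, the binomial bookkeeping and the boundary cases are routine. A secondary point to handle cleanly is that the two peeling operations---removing the top of tooth $s$ for the main formula, and removing the top of tooth $s-1$ together with $m$ for the recursion---are genuine bijections onto the claimed index sets, which follows from the poset constraints forcing each deleted block to lie to the right of the relevant maximum.
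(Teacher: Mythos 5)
Your proposal is correct, but it reaches the recursion by a genuinely different route than the paper. The paper never gives $F_{s,t}(k)$ a poset-theoretic meaning: it encodes the prefix $\pi_1\pi_2\cdots\pi_k$ (ending at the entry $(s-1)t+1$) of a $321$-avoiding linear extension of $K_{s,t}^\beta$ as a word of length $k$ over $\{a,b,c\}$ --- $c$ for entries congruent to $1$ modulo $t$, $b$ for the remaining left-to-right maxima, $a$ for everything else --- characterizes these words by four conditions, and proves $\vert\mathcal F_{s,t}(k)\vert=F_{s,t}(k)$ by stripping off the final block of the word. You instead interpret $F_{s,t}(k)$ directly as the number of $321$-avoiding linear extensions of the truncated comb $\widetilde K_{s,t}$ with the maximum at position $k$, and prove the recursion by deleting that maximum together with the top $t-1$ entries of tooth $s-1$ and counting the shuffles that re-insert them; your inner index $j$ (the number of tooth-$(s-1)$ entries left of the maximum) corresponds to the paper's count of $b$'s in the last block, and your constraint $0\le(k-i-1)-j\le(s-2)t+1-i$ reproduces the paper's lower limit $\max\{0,k-(s-2)t-2\}$ exactly, with the $i$-dependence cancelling just as it must. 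The outer factor $\binom{st-k}{t-1}$ arises the same way in both proofs. The case analysis you flag as the main obstacle does go through: since every entry of $S$ is less than every entry of $T$, which is less than $m$, and each block is internally increasing, an inserted large entry can only play the top of a hypothetical $321$ pattern, and everything smaller to its right lies in a single increasing run, so no pattern can form; conversely, $321$-avoidance forces the suffix after $m$ to be increasing, which pins down the $S$-then-$T$ arrangement. Your route buys a cleaner, intrinsic meaning for $F_{s,t}(k)$ and avoids the paper's encoding/decoding step (which requires a somewhat delicate uniqueness argument for recovering $\pi_1\pi_2\cdots\pi_k$ from its word); the paper's word encoding buys, essentially for free, the upper bound $F_{s,t}(k)\le 2^{k-s}\binom{k}{s}$ of Remark \ref{Rem2}, which is needed later in the proof of Theorem \ref{Thm6} and is not immediate from your interpretation.
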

\begin{proof}
Let $\mathcal F_{s,t}(k)$ be the set of words $w=w_1w_2\cdots w_k$ of length $k$ over the alphabet $\{a,b,c\}$ that satisfy the following properties: 
\begin{enumerate}[(i)]
\item There are indices $\ell_1,\ell_2\ldots,\ell_s\in[k]$ such that $1=\ell_1<\ell_2<\cdots<\ell_s=k$ and $w_{\ell_1}=w_{\ell_2}=\cdots=w_{\ell_s}=c$; 
\item If $j\in[k]\setminus\{\ell_1,\ell_2,\ldots,\ell_s\}$, then $w_j\neq c$;
\item For each $i\in[s]$, $\ell_i\leq (i-1)t+1$. 
\item If $B_i$ denotes the number of occurrences of the letter $b$ in the word $w_{\ell_{i-1}+1}w_{\ell_{i-1}+2}\cdots w_{\ell_i-1}$, then $\ell_i-(i-2)t-2\leq B_i\leq t-1$.
\end{enumerate}
We claim that $\vert\mathcal F_{s,t}(k)\vert=F_{s,t}(k)$. To see this, first observe that (i) forces $\mathcal F_{s,t}(k)=\emptyset$ if $k<s$. Similarly, (i) and (iii) guarantee that $\mathcal F_{s,t}(k)=\emptyset$ if $k>(s-1)t+1$. Now, suppose $s\leq k\leq (s-1)t+1$. If $s=2$, one may use (i), (iii), and (iv) to see that $\mathcal F_{s,t}(k)=\{cb^{k-2}c\}$. Hence, $\vert\mathcal F_{2,t}(k)\vert=F_{2,t}(k)$. 

Assume $s\geq 3$. We will build a word $w=w_1w_2\cdots w_k$ in $\mathcal F_{s,t}(k)$. Preserve the notation from properties (i)--(iv) above. The word $w_1w_2\cdots w_{\ell_{s-1}}$ must be an element of $\mathcal F_{s-1,t}(\ell_{s-1})$. The word $w_{\ell_{s-1}+1}w_{\ell_{s-1}+2}\cdots w_{k-1}$ can use only the letters $a$ and $b$, and $w_k$ must be $c$. By (iv), there are $\displaystyle{\sum_{j=\max\{0,k-(s-2)t-2\}}^{t-1}{k-\ell_{s-1}-1\choose j}}$ ways to choose which of the letters in $w_{\ell_{s-1}+1}w_{\ell_{s-1}+2}\cdots w_{k-1}$ are $b$'s. There are $\vert \mathcal F_{s-1,t}(\ell_{s-1})\vert$ ways to choose $w_1w_2\cdots w_{\ell_{s-1}}$. It follows that 
\[\vert\mathcal F_{s,t}(k)\vert=\sum_{\ell_{s-1}=s-1}^{k-1}\vert\mathcal F_{s-1,t}(\ell_{s-1})\vert\sum_{j=\max\{0,k-(s-2)t-2\}}^{t-1}{k-\ell_{s-1}-1\choose j}.\] Our claim follows by induction on $s$. 

Now, let $\pi=\pi_1\pi_2\cdots\pi_{st}$ be a $321$-avoiding linear extension of $K_{s,t}^\beta$. Let $k$ be the index such that $\pi_k=(s-1)t+1$. The elements $1,t+1,2t+1,\ldots,(s-1)t+1$ must form an increasing subsequence of $\pi$, as must $(s-1)t+1,(s-1)t+2,\ldots,st$. This forces $s\leq k\leq (s-1)t+1$. We can encode the string $\pi_1\pi_2\cdots\pi_k$ with a word $w=w_1w_2\cdots w_k$ as follows. If $\pi_\ell\equiv 1\pmod t$, let $w_\ell=c$. If $\pi_\ell$ is a left-to-right maximum of $\pi$ (meaning $\pi_\ell$ is greater than all entries to its left) and $\pi_\ell\not\equiv 1\pmod t$, let $w_\ell=b$. Otherwise, let $w_\ell=a$. We will show that $w\in\mathcal F_{s,t}(k)$ and that every word in $\mathcal F_{s,t}(k)$ is obtained uniquely in this way.  

For each $i\in[s]$, let $\ell_i$ be the index such that $\pi_{\ell_i}=(i-1)t+1$. Properties (i), (ii), and (iii) follow immediately from the definition of $w$ and the fact that $\pi$ is a linear extension of $K_{s,t}^\beta$. Furthermore, $B_i$ is equal to the number of elements of $\{(i-2)t+2,(i-2)t+3,\ldots,(i-1)t\}$ lying between $\pi_{\ell_{i-1}}$ and $\pi_{\ell_i}$ in $\pi$. This implies that $B_i\leq t-1$. All of the entries of $\pi$ to the left of $\pi_{\ell_i}=(i-1)t+1$ are less than $(i-1)t+1$. At most $(i-2)t+1$ of these entries are in the set $\{1,2,\ldots,(i-2)t+1\}$, and exactly $B_i$ are in the set $\{(i-2)t+2,(i-2)t+3,\ldots,(i-1)t\}$. Therefore, $\ell_i-1\leq (i-2)t+1+B_i$. It follows that (iv) holds, so $w\in\mathcal F_{s,t}(k)$.  

Suppose, now, that we are given the word $w\in\mathcal F_{s,t}(k)$. We may recover $\pi_1\pi_2\cdots\pi_k$ as follows. First, we know that $\pi_1=1$. Suppose we have already determined $\pi_1,\pi_2,\ldots,\pi_\ell$. Let $j$ be the largest integer such that $(j-1)t+1\in\{\pi_1,\pi_2,\ldots,\pi_\ell\}$. If $w_{\ell+1}=c$, then we must put $\pi_{\ell+1}=jt+1$. If $w_{\ell+1}=b$, then $\pi_{\ell+1}$ must be the smallest element of $\{(j-1)t+2,(j-1)t+3,\ldots,jt\}$ that is not in the set $\{\pi_1,\pi_2,\ldots,\pi_\ell\}$. The inequality $B_{j+1}\leq t-1$ in (iv) guarantees that such an element exists. If $w_{\ell+1}=a$, then $\pi_{\ell+1}$ must be an element of $\{1,2,\ldots,(j-1)t\}$ that is not in the set $\{\pi_1,\pi_2,\ldots,\pi_\ell\}$. One may use the inequality $\ell_{j+1}-(j-1)t-2\leq B_{j+1}$ in (iv) to show that an element of $\{1,2,\ldots,(j-1)t\}\setminus\{\pi_1,\pi_2,\ldots,\pi_\ell\}$ actually exists. In fact, $\pi_{\ell+1}$ must be the smallest such element. Indeed, if there were some $x\in\{1,2,\ldots,(j-1)t\}\setminus\{\pi_1,\pi_2,\ldots,\pi_\ell\}$ with $x<\pi_{\ell+1}$, then the entries $(j-1)t+1, \pi_{\ell+1}$, and $x$ would form a $321$ pattern in $\pi$. This shows that each word in $\mathcal F_{s,t}(k)$ is obtained uniquely from an initial string $\pi_1\pi_2\cdots\pi_k$ of a $321$-avoiding linear extension of $K_{s,t}^\beta$. 

We now construct a $321$-avoiding linear extension $\pi=\pi_1\pi_2\cdots\pi_{st}$ of $K_{s,t}^\beta$. Let $k$ be the index such that $\pi_k=(s-1)t+1$. By the preceding discussion, there are $F_{s,t}(k)$ ways to choose $\pi_1\pi_2\cdots\pi_k$. After doing so, there are $st-k$ entries left to use to form the string $\pi_{k+1}\pi_{k+2}\cdots\pi_{st}$. Those entries of $\pi_{k+1}\pi_{k+2}\cdots\pi_{st}$ that are less than $(s-1)t+1$ should appear in increasing order since $\pi$ must avoid $321$. The entries $(s-1)t+2,(s-1)t+3,\ldots,st$ should form an increasing subsequence of $\pi_{k+1}\pi_{k+2}\cdots\pi_{st}$ because $\pi$ is a linear extension of $K_{s,t}^\beta$. Therefore, $\pi_{k+1}\pi_{k+2}\cdots\pi_{st}$ is completely determined by the choice of the $t-1$ positions in this string that the entries $(s-1)t+2,(s-1)t+3,\ldots,st$ will occupy. There are ${st-k\choose t-1}$ ways to make this choice. Summing over $k$, we obtain the formula in the final statement of the theorem. 
\end{proof}
\begin{remark}\label{Rem2}
In the preceding proof, we showed that $F_{s,t}(k)=\vert\mathcal F_{s,t}(k)\vert$. Each element of $\mathcal F_{s,t}(k)$ is a word of length $k$ over the alphabet $\{a,b,c\}$ in which the letter $c$ appears exactly $s$ times. The number of such words is $2^{k-s}{k\choose s}$, so it follows from Theorem \ref{Thm5} that \[A_{321}(K_{s,t}^\beta)\leq\sum_{k=s}^{(s-1)t+1}2^{k-s}{k\choose s}{st-k\choose t-1}.\]
\end{remark}

We have used Theorem \ref{Thm5} to calculate several new values of $A_{321}(K_{s,t}^\beta)$ for various $s$ and $t$. Some of these values are shown in Table \ref{Tab1}. We have entered the values of $A_{321}(K_{3,t}^\beta)$ for $t\leq 40$ and the values of $A_{321}(K_{4,t}^\beta)$ for $t\leq 20$ into the Online Encyclopedia of Integer Sequences (sequences A275941 and A275942).

\begin{minipage}{\textwidth}  
\begin{center}
{\small

     \begin{tabular}{ | l |p{1cm}|p{1.67cm}|p{1.7cm}|p{2.18cm}|p{2.2cm}|p{2.55cm}|}
    \hline
    $s$ & $t=2$ & $t=3$ & $t=4$ & $t=5$ & $t=6$ & $t=7$ \\ \hline
    2 &3&10&35&126&462&1716 \\ 
    3 &12 &127 &1222 &11096 &97140 &830152 \\ 4&55&1866&49523&1147175&24446239&
    492996938\\ 5&273&29839&2182844&128783730&6664055770&
    316066050507\\ 6 &1428&504265&101666026&15268771939&
    1917617336329&213823357879553 \\ 
    7 & 7752&8859742&4922704260&1881489465581 & & \\ 8 & 43263&160216631 & & & & \\ 9 & 246675&2962451668 & & & & \\
    \hline
    \end{tabular}\captionof{table}{Values of $A_{321}(K_{s,t}^\beta)$ for some small $s,t$.}\label{Tab1}
}
\end{center}
\end{minipage}

Suppose $P$ is a finite poset whose Hasse diagram is a rooted tree. For $x\in P$, let $d(x)$ be the number of elements $y\in P$ such that $y\geq_P x$. A well-known formula due to Knuth \cite{Knuth} states that the number of linear extensions of $P$ is \[\frac{n!}{\prod_{x\in P}d(x)},\] where $n=\vert P\vert$. It follows that the number of linear extensions of $K_{2,t}^\beta$ is $\frac 12{2t\choose t}$. Observe that each linear extension of $K_{2,t}^\beta$ is a union of two increasing subsequences (one formed by the entries $1,2,\ldots,t$ and the other formed by $t+1,t+2,\ldots,2t$). This means that every linear extension of $K_{s,t}^\beta$ avoids $321$, so $A_{321}(K_{2,t}^\beta)=\frac 12{2t\choose t}$. As a consequence, $\displaystyle{\lim_{t\to\infty}\sqrt[t]{A_{321}(K_{2,t}^\beta)}}=4$. In the following theorem, we use Theorem \ref{Thm5} to prove a more general statement. 

\begin{theorem}\label{Thm6}
For each $s\geq 2$, \[\lim_{t\to\infty}\sqrt[t]{A_{321}(K_{s,t}^\beta)}=2^s.\] 
\end{theorem}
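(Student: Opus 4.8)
The plan is to prove the two-sided bound $2^s \leq \lim_{t\to\infty}\sqrt[t]{A_{321}(K_{s,t}^\beta)} \leq 2^s$ by squeezing, using Theorem \ref{Thm5} for the lower bound and the simpler inequality from Remark \ref{Rem2} for the upper bound. Both bounds reduce to estimating sums of the form $\sum_{k=s}^{(s-1)t+1} c_k {st-k \choose t-1}$ as $t\to\infty$, and the root-extraction $\sqrt[t]{\cdot}$ means only the exponential growth rate of the summands matters; polynomial prefactors and the number of terms (which is linear in $t$) all contribute a factor whose $t$-th root tends to $1$.

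For the upper bound I would start from Remark \ref{Rem2}, which gives $A_{321}(K_{s,t}^\beta) \leq \sum_{k=s}^{(s-1)t+1} 2^{k-s}{k \choose s}{st-k \choose t-1}$. Since this is a sum of $O(t)$ terms, its $t$-th root has the same limit as the $t$-th root of its largest term. I would reparametrize $k = \kappa t$ with $\kappa \in [0, s-1]$ and apply Stirling's approximation to the product $2^{k-s}{k\choose s}{st-k\choose t-1}$; the binomial ${k\choose s}$ is only polynomial in $t$ for fixed $s$ and contributes nothing to the growth rate, so the exponential rate is governed by $2^{\kappa t}\binom{(s-\kappa)t}{t}$ (up to the negligible $-1$ in the lower binomial entry). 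Maximizing the resulting entropy expression over $\kappa$ should yield exactly $2^s$, which gives the upper bound.

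For the lower bound I would exhibit enough $321$-avoiding linear extensions to force growth at rate $2^s$. The cleanest route is to retain only the term $k = (s-1)t+1$ (the top index) in the formula of Theorem \ref{Thm5}, so that $A_{321}(K_{s,t}^\beta) \geq {st - ((s-1)t+1) \choose t-1} F_{s,t}((s-1)t+1) = {t-1 \choose t-1} F_{s,t}((s-1)t+1) = F_{s,t}((s-1)t+1)$. It then suffices to show $\lim_{t\to\infty} \sqrt[t]{F_{s,t}((s-1)t+1)} = 2^s$. I would analyze this via the recursion defining $F_{s,t}$, or equivalently via the combinatorial model $\mathcal F_{s,t}(k)$: at the maximal index $k=(s-1)t+1$, each of the $s-1$ gaps between consecutive $c$'s must be filled as fully as possible, and the binomial factors in the recursion become full sums $\sum_j \binom{\cdot}{j}$ that each grow like $2^{(t-1)}$, so that iterating the recursion $s-1$ times produces a factor of roughly $(2^t)^{s-1}$ times the base case contribution. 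I would verify that the lower limits $\max\{0, k-(s-2)t-2\}$ in the inner sum do not truncate the binomial sum enough to change the exponential rate, and that the base case $F_{2,t}(t+1)=1$ contributes the remaining factor of $2^t$ when combined with the first level of recursion.

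The main obstacle will be the lower-bound estimate of $F_{s,t}((s-1)t+1)$: the recursion for $F_{s,t}$ is a nested convolution with truncated binomial sums, and I must confirm that the truncation lower limit $\max\{0, k-(s-2)t-2\}$ does not suppress the dominant contributions. I expect that one can either prove by induction on $s$ that $F_{s,t}((s-1)t+1)$ grows at rate $2^{st}/2^t = 2^{(s-1)t}$ — so that combined with the reasoning above the total rate is $2^s$ — or, more robustly, establish matching bounds $2^{(s-1)t}/\mathrm{poly}(t) \leq F_{s,t}((s-1)t+1) \leq 2^{(s-1)t}$ directly, since the crude count $\vert \mathcal F_{s,t}(k)\vert \leq 2^{k-s}\binom{k}{s}$ from Remark \ref{Rem2} already caps the growth and one only needs a comparable lower bound. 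Once both one-sided growth rates are pinned to $2^s$, the squeeze theorem finishes the proof.
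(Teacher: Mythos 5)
Your upper bound is fine and is essentially the paper's own argument: starting from Remark \ref{Rem2}, one bounds the sum by the number of terms (polynomial in $t$) times the largest summand, and the $t$-th root of $2^{k-s}\binom{st-k}{t-1}$ is maximized, as $t\to\infty$, at the interior index $k=st-2(t-1)$, where it tends to $2^{s-2}\cdot 4=2^s$.

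The lower bound, however, has a genuine gap that cannot be repaired within your plan. Keeping only the top-index term gives $A_{321}(K_{s,t}^\beta)\geq\binom{t-1}{t-1}F_{s,t}((s-1)t+1)=F_{s,t}((s-1)t+1)$, but by the very cap you cite from Remark \ref{Rem2}, $F_{s,t}((s-1)t+1)\leq 2^{(s-1)t+1-s}\binom{(s-1)t+1}{s}$, whose $t$-th root tends to $2^{s-1}$ --- a factor of $2$ short of $2^s$. Your arithmetic claiming that a growth rate of $2^{(s-1)t}$ for $F_{s,t}((s-1)t+1)$ ``combined with the reasoning above'' yields total rate $2^s$ is internally inconsistent: at $k=(s-1)t+1$ the binomial prefactor $\binom{st-k}{t-1}$ equals $1$ and contributes nothing, so your bound can only give $\liminf_{t\to\infty}\sqrt[t]{A_{321}(K_{s,t}^\beta)}\geq 2^{s-1}$. (Concretely, for $s=3$ the recursion together with the hockey-stick identity gives exactly $F_{3,t}(2t+1)=\binom{2t-1}{t}$, rate $4$, not $8$.) Your heuristic about ``full binomial sums'' also fails precisely at the top index: there the truncation is $\max\{0,k-(s-2)t-2\}=t-1$, so each inner sum collapses to a single binomial coefficient rather than a sum of size $2^{t-1}$. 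The missing factor of $2$ must come from the prefactor $\binom{st-k}{t-1}$, so the term to retain is the interior one, $k=st-2(t-1)=(s-2)t+2$ (exactly the largest $k$ for which the truncation $\omega_{s,t}(k)=\max\{0,k-(s-2)t-2\}$ vanishes): there $\binom{st-k}{t-1}=\binom{2(t-1)}{t-1}$ contributes rate $4$, and what is then needed is the pointwise estimate $F_{s,t}(k)\geq 2^{k-s}/\mathrm{poly}(k)$, contributing rate $2^{s-2}$. That estimate is the real content of the proof --- the paper establishes it as inequality \eqref{Eq7} by induction on $s$, extracting from the convolution the single term $i=k-1-2\omega_{s,t}(k)$ together with the central binomial coefficient $\binom{2\omega_{s,t}(k)}{\omega_{s,t}(k)}$ from the truncated inner sum; some such argument handling the truncation is unavoidable, and your proposal does not supply it.
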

\begin{proof}
We have just seen that this theorem holds when $s=2$, so assume $s\geq 3$. Let $F_{s,t}(k)$ be as defined in the statement of Theorem \ref{Thm5}. We will prove that 
\begin{equation}\label{Eq7}
F_{s,t}(k)\geq\frac{2^{k-s}}{(k-2)(k-3)\cdots(k-s+1)}\hspace{.5cm}\text{whenever}\hspace{.5cm}s\leq k\leq (s-1)t+1. 
\end{equation}

If $3\leq k\leq 2t+1$, then \[F_{3,t}(k)=\sum_{i=2}^{k-1}F_{2,t}(i)\sum_{j=\max\{0,k-t-2\}}^{t-1}{k-i-1\choose j}\geq F_{2,t}(2){k-3\choose \left\lfloor (k-3)/2\right\rfloor}\geq \frac{2^{k-3}}{k-2}\] because $F_{2,t}(2)=1$. This proves \eqref{Eq7} in the case $s=3$. Now, assume $s\geq 4$, $t\geq 2$, and $s\leq k\leq(s-1)t+1$. We have $0<(s-3)t+3-s$, so $k\leq(s-1)t+1< 2(s-2)t+4-s$. This implies that $2k-2(s-2)t-4<k-s$, so 
\begin{equation}\label{Eq10}
s-1<k-1-2\omega_{s,t}(k)\leq k-1,
\end{equation} where $\omega_{s,t}(k)=\max\{0,k-(s-2)t-2\}$. Now, 
\[F_{s,t}(k)=\sum_{i=s-1}^{k-1}F_{s-1,t}(i)\sum_{j=\omega_{s,t}(k)}^{t-1}{k-i-1\choose j}\] 
\begin{equation}\label{Eq8} 
\geq F_{s-1,t}(k-1-2\omega_{s,t}(k)){k-(k-1-2\omega_{s,t}(k))-1\choose \omega_{s,t}(k)}=F_{s-1,t}(k-1-2\omega_{s,t}(k)){2\omega_{s,t}(k)\choose \omega_{s,t}(k)}.
\end{equation} 
From the definition of $\omega_{s,t}(k)$, we see that $k-(s-2)t-2\leq 2\omega_{s,t}(k)$. Rewriting this inequality yields $k-1-2\omega_{s,t}(k)\leq (s-2)t+1$. We also know from \eqref{Eq10} that $s-1\leq k-1-2\omega_{s,t}(k)$. Inducting on $s$, we see from \eqref{Eq7} that \[F_{s-1,t}(k-1-2\omega_{s,t}(k))\geq \frac{2^{k-s-2\omega_{s,t}(k)}}{(k-3-2\omega_{s,t}(k))(k-4-2\omega_{s,t}(k))\cdots(k-s+1-2\omega_{s,t}(k))}\] 
\begin{equation}\label{Eq9} 
\geq\frac{2^{k-s-2\omega_{s,t}(k)}}{(k-3)(k-4)\cdots(k-s+1)}.
\end{equation} 
Because $k\leq (s-1)t+1<2(s-2)t+1$, we have $2(k-(s-2)t-2)+1<k-2$. This implies that $2\omega_{s,t}(k)+1<k-2$, so \[{2\omega_{s,t}(k)\choose\omega_{s,t}(k)}\geq \frac{2^{2\omega_{s,t}(k)}}{2\omega_{s,t}(k)+1}>\frac{2^{2\omega_{s,t}(k)}}{k-2}.\] Combining this last inequality with \eqref{Eq8} and \eqref{Eq9}, we obtain \eqref{Eq7}. 

We may now combine Theorem \ref{Thm5} with \eqref{Eq7} to find that \[A_{321}(K_{s,t}^\beta)\geq\sum_{k=s}^{(s-1)t+1}{st-k\choose t-1}\frac{2^{k-s}}{(k-2)(k-3)\cdots(k-s+1)}\geq\sum_{k=s}^{(s-1)t+1}{st-k\choose t-1}\frac{2^{k-s}}{(k-2)^{s-2}}\] \[\geq{st-(st-2(t-1))\choose t-1}\frac{2^{(st-2(t-1))-s}}{((st-2(t-1))-2)^{s-2}}={2(t-1)\choose t-1}\frac{2^{st-2(t-1)-s}}{(st-2t-4)^{s-2}}.\] Consequently, 
\[\liminf_{t\to\infty}\sqrt[t]{A_{321}(K_{s,t}^\beta)}\geq\lim_{t\to\infty}\sqrt[t]{{2(t-1)\choose t-1}\frac{2^{st-2(t-1)-s}}{(st-2t-4)^{s-2}}}=2^s.\] 

From Remark \ref{Rem2}, we know that \[A_{321}(K_{s,t}^\beta)\leq\sum_{k=s}^{(s-1)t+1}2^{k-s}{k\choose s}{st-k\choose t-1}\leq ((s-1)t+1)^s\sum_{k=s}^{(s-1)t+1}2^{k-s}{st-k\choose t-1}.\] It is straightforward to show that the maximum value of $2^{k-s}{st-k\choose t-1}$ for $s\leq k\leq (s-1)t+1$ occurs when $k=st-2(t-1)$. Therefore, \[A_{321}(K_{s,t}^\beta)\leq ((s-1)t+1)^s((s-1)t-s+2)2^{st-2(t-1)-s}{2(t-1)\choose t-1}.\] As a consequence, \[\limsup_{t\to\infty}\sqrt[t]{A_{321}(K_{s,t}^\beta)}\leq\lim_{t\to\infty}\sqrt[t]{((s-1)t+1)^s((s-1)t-s+2)2^{st-2(t-1)-s}{2(t-1)\choose t-1}}=2^s.\qedhere\]   
\end{proof}

\begin{center}
\underline{\bf 3.5 $A_{231,321}(K_{s,t}^\alpha)$}
\end{center}
Suppose $\sigma$ and $\tau$ are two patterns of length $3$. Yakoubov enumerated the linear extensions of $K_{s,t}^\beta$ that avoid both $\sigma$ and $\tau$ (see Theorem \ref{ThmY3}). She also enumerated the linear extensions of $K_{s,t}^\alpha$ avoiding these two patterns in all cases except that in which $\sigma$ and $\tau$ are the patterns $231$ and $321$. The following theorem provides a simple recurrence relation for the numbers $A_{231,321}(U_{\text{spine}=s,n}^\alpha)$. This serves to enumerate the linear extensions of $K_{s,t}^\alpha$ avoiding $231$ and $321$ because $U_{\text{spine}=s,st}^\alpha=K_{s,t}^\alpha$. 

\begin{theorem}\label{Thm7}
If $n\leq s$, then $A_{231,321}(U_{\text{spine}=s,n}^\alpha)=1$. If $n>s$, then \[A_{231,321}(U_{\text{spine}=s,n}^\alpha)=\sum_{j=1}^sA_{231,321}(U_{\text{spine}=s,n-j}^\alpha).\]\end{theorem}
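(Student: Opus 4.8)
The plan is to prove both statements by an explicit bijection that cuts a $\{231,321\}$-avoiding linear extension at the position of its largest entry. First I would dispose of the base case: when $n\le s$, all labels $1,\dots,n$ lie on the first row (the spine) of $U_{\text{spine}=s,n}^\alpha$, so the poset is a chain $1<2<\cdots<n$ whose unique linear extension $12\cdots n$ avoids every pattern. Hence $A_{231,321}(U_{\text{spine}=s,n}^\alpha)=1$.

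For $n>s$ I would first record the convenient reformulation that a permutation avoids both $231$ and $321$ precisely when there are no indices $i<j<k$ with $\pi_i>\pi_k$ and $\pi_j>\pi_k$; equivalently, every entry has at most one larger entry to its left. Let $\pi$ be a $\{231,321\}$-avoiding linear extension of $U_{\text{spine}=s,n}^\alpha$, and let $p$ be the position of the maximal entry $n$. Since $n$ already lies to the left of every entry in positions $p+1,\dots,n$, each such entry may have no \emph{other} larger predecessor; this forces the suffix $\pi_{p+1}\cdots\pi_n$ to be increasing and to consist entirely of values exceeding $\pi_1,\dots,\pi_{p-1}$. Consequently the prefix occupies exactly $\{1,\dots,p-1\}$ and the suffix equals $p,p+1,\dots,n-1$ in increasing order. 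The cover relation $n-s\lessdot n$ of the comb then forces $n-s$ to precede $n$; as $n-s$ cannot lie in the suffix, we obtain $n-s\le p-1$, so $p$ ranges over the $s$ values $n-s+1,\dots,n$.

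Next I would identify the prefix as precisely a $\{231,321\}$-avoiding linear extension of the subposet induced on $\{1,\dots,p-1\}$, which is exactly $U_{\text{spine}=s,p-1}^\alpha$, and check that this correspondence is reversible. In the reverse direction, given $p$ in the admissible range and any $\{231,321\}$-avoiding linear extension $\rho$ of $U_{\text{spine}=s,p-1}^\alpha$, I would verify that $\pi=\rho\,n\,(p,p+1,\dots,n-1)$ is again a $\{231,321\}$-avoiding linear extension of $U_{\text{spine}=s,n}^\alpha$: any forbidden triple $i<j<k$ must have all three indices inside $\rho$ (a suffix entry has only the single larger predecessor $n$), while the spine and tooth relations $x\lessdot x+s$ hold because $x$ always precedes $x+s$, the only delicate instance $x+s=n$ being guaranteed by $p\ge n-s+1$. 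Summing over the $s$ admissible positions and setting $j=n-(p-1)$ gives
\[
A_{231,321}(U_{\text{spine}=s,n}^\alpha)=\sum_{p=n-s+1}^{n}A_{231,321}(U_{\text{spine}=s,p-1}^\alpha)=\sum_{j=1}^{s}A_{231,321}(U_{\text{spine}=s,n-j}^\alpha).
\]

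I expect the main obstacle to be the verification in the reverse map that the reconstructed $\pi$ is genuinely a linear extension rather than merely pattern-avoiding. The subtle regime is $s<n<2s$, where the spine labels $1,\dots,s$ are split across the central entry $n$: some of the spine sits in $\rho$ while the remaining spine labels appear inside the increasing suffix. One must confirm that the spine still reads in increasing order across this split and that every tooth relation is respected, with the boundary relation $n-s\lessdot n$ being exactly what determines the lower endpoint of the range for $p$ (and hence the lower summation limit). The remaining checks are routine bookkeeping.
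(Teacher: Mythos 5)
Your proposal is correct and is essentially the paper's own argument: both proofs cut the permutation at the maximal entry $n$, use $231$-avoidance to force every prefix entry below every suffix entry, use $321$-avoidance to force the suffix $(n-j+1)\cdots(n-1)$ to be increasing, and use the relation $n-s<n$ in the poset to bound the cut position, giving the recurrence. Your indexing by the position $p$ of $n$ is just a reparametrization of the paper's index $j$ (the smallest $j$ with $n-j$ preceding $n$), via $j=n-p+1$.
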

\begin{proof}
The labeled poset $U_{\text{spine}=s,n}^\alpha$ only has one linear extension if $n\leq s$ because it is a totally ordered set. This linear extension is the identity permutation in $S_n$, so $A_{231,321}(U_{\text{spine}=s,n}^\alpha)=1$ if $n\leq s$. 

Suppose $n>s$, and let $\pi$ be a linear extension of $U_{\text{spine}=s,n}^\alpha$ that avoids $231$ and $321$. Let $j$ be the smallest positive integer such that $n-j$ precedes $n$ in $\pi$. We know that $n-s$ must precede $n$ in $\pi$, so $j\leq s$. Because $\pi$ avoids $231$, all of the entries to the left of $n$ in $\pi$ must be smaller than all of the entries to the right of $n$. Furthermore, the entries to the right of $n$ must be in increasing order because $\pi$ avoids $321$. This means that $\pi=\sigma n(n-j+1)(n-j+2)\cdots(n-1)$, where $\sigma$ is a linear extension of $U_{\text{spine}=s,n-j}^\alpha$ that avoids $231$ and $321$. On the other hand, if $1\leq j\leq s$ and $\tau$ is a linear extension of $U_{\text{spine}=s,n-j}^\alpha$ that avoids $231$ and $321$, then $\tau n(n-j+1)(n-j+2)\cdots(n-1)$ is a linear extension of $U_{\text{spine}=s,n}^\alpha$ that avoids $231$ and $321$. The recurrence relation stated in the theorem now follows. 
\end{proof}

Yakoubov conjectured that $A_{231,321}(K_{s,2}^\alpha)=2^{s-1}(s-1)+1$. This follows as an easy corollary of Theorem \ref{Thm7}. 

\begin{corollary}
For all $s\geq 1$, $A_{231,321}(K_{s,2}^\alpha)=2^{s-1}(s-1)+1$. 
\end{corollary}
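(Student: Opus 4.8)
The plan is to specialize the recurrence of Theorem \ref{Thm7} to the case $t=2$ and solve it explicitly. Since $U_{\text{spine}=s,2s}^\alpha=K_{s,2}^\alpha$, it suffices to compute $A_{231,321}(U_{\text{spine}=s,2s}^\alpha)$. Fixing $s$, I would abbreviate $a(n)=A_{231,321}(U_{\text{spine}=s,n}^\alpha)$, so that Theorem \ref{Thm7} reads $a(n)=1$ for $n\leq s$ and $a(n)=\sum_{j=1}^s a(n-j)$ for $n>s$.

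First I would extract the initial value $a(s+1)$. For $n=s+1$ the sum $\sum_{j=1}^s a(s+1-j)$ runs over the arguments $s,s-1,\ldots,1$, each of which is at most $s$, so every summand equals $1$ and $a(s+1)=s$. Next I would telescope the recurrence. For $n\geq s+2$ both $a(n)$ and $a(n-1)$ are governed by the sum rule, and subtracting $a(n-1)=\sum_{j=1}^s a(n-1-j)$ from $a(n)=\sum_{j=1}^s a(n-j)$ collapses the overlapping terms to give $a(n)=2a(n-1)-a(n-1-s)$. The key observation is that in the range $s+2\leq n\leq 2s$ the lagged argument satisfies $1\leq n-1-s\leq s-1$, so $a(n-1-s)=1$ by the base case, and the relation simplifies to the constant-coefficient recurrence $a(n)=2a(n-1)-1$.

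Setting $c(n)=a(n)-1$ then turns this into $c(n)=2c(n-1)$, whence $c(n)=2^{\,n-s-1}c(s+1)=2^{\,n-s-1}(s-1)$ and $a(n)=2^{\,n-s-1}(s-1)+1$ for $s+1\leq n\leq 2s$. Evaluating at $n=2s$ yields $A_{231,321}(K_{s,2}^\alpha)=2^{s-1}(s-1)+1$, as claimed.

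This is short enough that there is no genuine obstacle; the only point requiring care is the bookkeeping of index ranges, namely verifying that the lagged argument $n-1-s$ lands in the regime where the base case $a(\cdot)=1$ applies. This is exactly what confines the clean recurrence to $n\leq 2s$ (it would break once $n-1-s$ exceeds $s$), and it is fortunate that $t=2$ keeps us precisely inside that window. The degenerate case $s=1$ needs no separate treatment, since then $2s=s+1$ and the value $a(s+1)=s=1$ already agrees with the formula $2^{s-1}(s-1)+1$.
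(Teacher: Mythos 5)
Your proof is correct and is essentially the paper's argument: the paper also deduces from Theorem \ref{Thm7} that $A_{231,321}(U_{\text{spine}=s,s+\ell}^\alpha)=2^{\ell-1}(s-1)+1$ for $\ell\in[s]$ (by an induction it calls straightforward) and then sets $\ell=s$. Your telescoping of the full-history recurrence into $a(n)=2a(n-1)-1$ on the window $s+1\leq n\leq 2s$ is simply a clean way of carrying out that same induction, including the index bookkeeping the paper omits.
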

\begin{proof}
Using Theorem \ref{Thm7}, it is straightforward to prove by induction on $\ell$ that $A_{231,321}(U_{\text{spine}=s,s+\ell}^\alpha)$ $=2^{\ell-1}(s-1)+1$ for all $\ell\in[s]$. Since $U_{\text{spine}=s,2s}^\alpha=K_{s,2}^\alpha$, the desired result follows. 
\end{proof}

\section{Concluding Remarks}
The idea to study pattern-avoiding linear extensions of partially ordered sets is quite new, so there are certainly many open problems left in this line of research. As Anderson et al. \cite{Anderson} have done, one might wish to continue this line of work by studying linear extensions of other families of partially ordered sets. Even with respect to comb posets, there are several problems left unsettled. For example, is there a reasonably nice closed formula for $A_{321}(K_{s,t}^\alpha)$ or $A_{321}(K_{s,t}^\beta)$ for general $s,t$? Even if we are not able to find a closed formula for $A_{321}(K_{s,t}^\alpha)$, it would be interesting to find the true value of the growth rate $\displaystyle{\lim_{s\to\infty}\sqrt[s]{A_{321}(K_{s,t}^\alpha)}}$ (we derived upper and lower bounds in Theorem \ref{Thm4}). Another natural direction to follow in extending this work would involve enumerating linear extensions of comb posets that avoid patterns of length $4$ or more.   

\section{Acknowledgments} 
The author would like to express his deepest gratitude to Joe Gallian for inviting him to the 2016 REU at the University of Minnesota Duluth and for reading through this article. The REU, supported by grants NSF-1358659 and NSA H98230-16-1-0026, proved to be an exceptional experience filled with great mathematics.


\begin{thebibliography}{9}
\bibitem{Anderson}
D. Anderson, E. Egge, M. Riehl, L. Ryan, R. Steinke, and Y. Vaughan, \emph{Pattern avoiding linear extensions of rectangular posets}. (2016) arXiv:1605.06825.

\bibitem{Banks}
J. Banks, S. Garrabrant, M. Huber, and A. Perizzolo, \emph{Using TPA to count linear
extensions.} (2010) arXiv:1010.4981v1.

\bibitem{Bertrand}
J. Bertrand, \emph{Solution d'un probl\`eme,} Comptes Rendus de l'Acad\'emie des Sciences, Paris 105 (1887), 369.

\bibitem{Bona}
M. B\'ona, \emph{Combinatorics of permutations.} Second Edition. CRC Press, 2012. 

\bibitem{Brightwell}
G. Brightwell and P. Winkler, \emph{Counting linear extensions.} Order 8 (1991), 225--242.

\bibitem{Connolly}
S. Connolly, Z. Gabor, and A. Godbole, \emph{The location of the first ascent in a $123$-avoiding permutation.} INTEGERS 15 (2015). 

\bibitem{De Loof}
K. De Loof, H. De Meyer, and B. De Baets, \emph{Exploiting the lattice of ideals representation of a poset}, Fundamenta Informaticae 71 (2006), 309-321.

\bibitem{Fekete}
M. Fekete, \emph{\"Uber die Verteilung der Wurzeln bei gewissen algebraischen Gleichungen mit. ganzzahligen Koeffizienten,} Math. Zeit. 17 (1923), 228--249.

\bibitem{Karzanov}
A. Karzanov and L. Khachiyan, \emph{On the Conductance of Order Markov Chains.} Order 8
(1991), 7--15.

\bibitem{Knuth}
D. E. Knuth. \emph{The Art of Computer Programming, Volume III:
Sorting and Searching.} Addison-Wesley, 1973.

\bibitem{Simion}
R. Simion and F. Schmidt, \emph{Restricted permutations.} European J. Combin. 6 (1995), 383--406. 

\bibitem{Stanley}
R. Stanley, \emph{Enumerative Combinatorics, Volume 1}, Second Edition. Cambridge University Press, Cambridge UK, 2012.

\bibitem{Yakoubov}
S. Yakoubov. \emph{Pattern avoidance in extensions of comb-like posets.} J. Comb. 6 (2015) 249–-272.

\end{thebibliography}
\end{document}